
\documentclass[12pt,a4paper]{article}%
\usepackage[utf8]{inputenc}
\usepackage{amsmath}
\usepackage{amsfonts}
\usepackage{amssymb}
\usepackage{graphicx}
\usepackage[space]{grffile}%
\setcounter{MaxMatrixCols}{30}
%TCIDATA{OutputFilter=latex2.dll}
%TCIDATA{Version=5.50.0.2960}
%TCIDATA{CSTFile=40 LaTeX article.cst}
%TCIDATA{Created=Sunday, June 28, 2020 18:26:42}
%TCIDATA{LastRevised=Saturday, August 21, 2021 21:28:36}
%TCIDATA{<META NAME="GraphicsSave" CONTENT="32">}
%TCIDATA{<META NAME="SaveForMode" CONTENT="1">}
%TCIDATA{BibliographyScheme=Manual}
%TCIDATA{<META NAME="DocumentShell" CONTENT="Standard LaTeX\Blank - Standard LaTeX Article">}
%BeginMSIPreambleData
\providecommand{\U}[1]{\protect\rule{.1in}{.1in}}
%EndMSIPreambleData
\newtheorem{theorem}{Theorem}

\newtheorem{conjecture}[theorem]{Conjecture}
\newtheorem{corollary}[theorem]{Corollary}

\newtheorem{definition}[theorem]{Definition}
\newtheorem{example}[theorem]{Example}

\newtheorem{lemma}[theorem]{Lemma}

\newtheorem{proposition}[theorem]{Proposition}

\newenvironment{proof}[1][Proof]{\noindent\textbf{#1.} }{\ \hfill \rule{0.5em}{0.5em}\bigskip}
\graphicspath{{D:/Dropbox/Riste-Sedlar/MetricDim/Paper 6/Slike/}}

\textwidth=16cm
\hoffset=-1.2cm
\voffset=-2.cm
\textheight=23cm
\begin{document}

\title{Metric dimensions vs. cyclomatic number of graphs with minimum degree at least two}
\author{Jelena Sedlar$^{1,3}$,\\Riste \v Skrekovski$^{2,3}$ \\[0.3cm] {\small $^{1}$ \textit{University of Split, Faculty of civil
engineering, architecture and geodesy, Croatia}}\\[0.1cm] {\small $^{2}$ \textit{University of Ljubljana, FMF, 1000 Ljubljana,
Slovenia }}\\[0.1cm] {\small $^{3}$ \textit{Faculty of Information Studies, 8000 Novo
Mesto, Slovenia }}\\[0.1cm] }
\maketitle

\begin{abstract}
The vertex (resp. edge) metric dimension of a connected graph $G,$ denoted by
$\mathrm{dim}(G)$ (resp. $\mathrm{edim}(G)$), is defined as the size of a
smallest set $S\subseteq V(G)$ which distinguishes all pairs of vertices
(resp. edges) in $G.$ Bounds $\mathrm{dim}(G)\leq L(G)+2c(G)$ and
$\mathrm{edim}(G)\leq L(G)+2c(G),$ where $c(G)$ is the cyclomatic number in
$G$ and $L(G)$ depends on the number of leaves in $G$, are known to hold for
cacti and are conjectured to hold for general graphs. In leafless graphs it
holds that $L(G)=0,$ so for such graphs the conjectured upper bound becomes
$2c(G).$ In this paper, we show that the bound $2c(G)$ cannot be attained by
leafless cacti, so the upper bound for such cacti decreases to $2c(G)-1$, and
we characterize all extremal leafless cacti for the decreased bound. We
conjecture that the decreased bound holds for all leafless graphs, i.e. graphs
with minimum degree at least two. We support this conjecture by showing that
it holds for all graphs with minimum degree at least three and that it is
sufficient to show that it holds for all $2$-connected graphs, and we also
verify the conjecture for graphs of small order.

\end{abstract}

\section{Introduction}

All graphs in this paper are tacitly assumed to be connected. We consider
several metric dimensions in connected graphs, and all of them involve the
notion of distance, so we define it here first. For a pair of vertices $u$ and
$v,$ the \emph{distance} $d(u,v)$ is defined as the length of the shortest
path connecting vertices $u$ and $v$. For a pair consisting of a vertex $u$
and an edge $e=vw,$ the \emph{distance} $d(u,e)$ is defined by $d(u,e)=\min
\{d(u,v),d(u,w)\}.$ Now, let $s$ be a vertex from $G$ and $X\subseteq V(G)\cup
E(G),$ we say that a pair $x$ and $x^{\prime}$ from $X$ is
\emph{distinguished} by $s$ if $d(s,x)\not =d(s,x^{\prime})$. We say that the
set $S\subseteq V(G)$ is a \emph{metric generator} of $X,$ if every pair
$x,x^{\prime}\in X$ is distinguished by at least one vertex from $S.$
Especially, if $S$ is a metric generator of $X=V(G)$ (resp. $X=E(G),$
$X=V(G)\cup E(G)$) then $S$ is a \emph{vertex} (resp. \emph{edge},
\emph{mixed}) \emph{metric generator}. The cardinality of a smallest vertex
(resp. edge, mixed) metric generator is the \emph{vertex} (resp. \emph{edge},
\emph{mixed}) \emph{metric dimension} of $G$ and it is denoted by
$\mathrm{dim}(G)$ (resp. $\mathrm{edim}(G),$ $\mathrm{mdim}(G)$).

The concept of vertex metric dimension is chronologically the first introduced
and it was studied related to the navigation systems \cite{HararyVertex} and
the problem of landmarks in networks \cite{KhullerVertex}. Since then this
variant of metric dimension was extensively investigated from various aspects
\cite{BuczkowskiVertex, ChartrandVertex, KleinVertex, MelterVertex}. Recently
it was noted that for some graphs the smallest vertex metric generators do not
distinguish all pairs of edges \cite{TratnikEdge}, so the notion of edge
metric dimension of a graph was introduced. This variant of metric dimension,
even though it is more recent, has also been quite studied \cite{GenesonEdge,
Knor, PeterinEdge, ZhuEdge, ZubrilinaEdge}. Finally, as a natural next step
the mixed metric dimension of a graph was introduced in \cite{KelencMixed} and
later further investigated in \cite{SedSkreTheta, SedSkrekMixed}. For this
paper particularly relevant is the line of investigation from papers
\cite{SedSkreBounds, SedSkreExtensionCactus, SedSkreUnicyclic} where an upper
bound on vertex and edge metric dimensions was established for unicyclic
graphs and further extended to cacti. There it was also conjectured that the
bound holds for connected graphs in general. In this paper we focus on cacti
without leaves and show that for such graphs the bound decreases by one and we
characterize which cacti without leaves attain this decreased bound.

For a vertex $v$ of a graph $G,$ the \emph{degree} $\deg(v)$ is the number of
vertices in $G$ adjacent to $v.$ The \emph{minimum degree} in a graph $G$ is
denoted by $\delta(G)$. If $\deg(v)=1,$ then we say $v$ is a \emph{leaf}. We
say that a set $S\subseteq V(G)$ is a \emph{vertex cut} if $G-S$ is
disconnected or trivial. If $S=\{v\}$ is a vertex cut, then we say $v$ is a
\emph{cut vertex}. The \emph{(vertex) connectivity} of a graph $G,$ denoted by
$\kappa(G),$ is defined as the cardinality of the smallest vertex cut in $G.$
A graph $G$ is $k$\emph{-connected} if $\kappa(G)\geq k.$ Notice that
$\kappa(G)\leq\delta(G)$, so $2$-connected graphs do not contain leaves. Also,
polycyclic cacti obviously have $\kappa(G)=1.$ A \emph{block} in a graph $G$
is any maximal $2$-connected subgraph of $G.$ A block $G_{i}$ in $G$ is
\emph{non-trivial,} if $G_{i}$ contains at least three vertices. We say that a
block $G_{i}$ of $G$ is an \emph{end-block} if $G_{i}$ contains precisely one
cut vertex from $G$.

Let $P=u_{1}\ldots u_{k}$ be an induced subpath of $G$ and let $v\in V(G)$ be
a vertex in $G$ of degree at least three. We say that $P$ is a \emph{thread}
in $G$ hanging at $v$ if the vertex $u_{1}$ is a leaf in $G$ and $u_{k}$ is
adjacent to $v$. We define the number $L(G)$ by
\[
L(G)=\sum_{v\in V(G),\ell(v)>1}(\ell(v)-1)
\]
where $\ell(v)$ is the number of threads hanging at a vertex $v$. Notice that
$\ell(v)>0$ may hold only for a vertex $v$ with degree $\geq3$. The
\emph{cyclomatic number} $c(G)$ of a graph $G$ is defined by $c(G)=\left\vert
E(G)\right\vert -\left\vert V(G)\right\vert +1.$ The following upper bounds on
$\mathrm{dim}(G)$ and $\mathrm{edim}(G)$\ are conjectured in
\cite{SedSkreExtensionCactus}, where it was also shown that the conjectured
bounds hold for graphs with edge disjoint cycles (also called \emph{cactus
graphs} or \emph{cacti}) and all extremal graphs characterized.

\begin{conjecture}
\label{Con_dim}Let $G$ be a connected graph. Then, $\mathrm{dim}(G)\leq
L(G)+2c(G).$
\end{conjecture}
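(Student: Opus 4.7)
The plan is to establish the bound by induction on the cyclomatic number $c(G)$, using the known result for cacti from \cite{SedSkreExtensionCactus} as a key stepping stone. For $c(G)=0$ the graph is a tree and the bound reduces to the classical inequality $\mathrm{dim}(G)\leq L(G)$. For $c(G)\geq 1$ the overall strategy is first to strip all threads and absorb their contribution into the $L(G)$ term, then to perform a block decomposition so that the argument can focus on the non-trivial blocks of $G$ (each of which inherits its own portion of $c(G)$).

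After the block decomposition, I would reduce to the $2$-connected leafless case. At a cut vertex $v$ separating two subgraphs, metric generators of the two sides can be glued together, with $v$ acting as a shared ``hub''; the bookkeeping that worked for cacti should transfer, since each non-trivial block contributes independently to both $c(G)$ and to the size of the generator being constructed. This step is what justifies the reduction asserted in the abstract, namely that it is enough to handle $2$-connected graphs, where $L(G)=0$ and the target becomes simply $2c(G)$.

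For the $2$-connected leafless case, I would proceed by an \emph{open ear decomposition}: write $G$ as an initial cycle $C_{0}$ together with ears $P_{1},\ldots,P_{k}$ added sequentially, so that $c(G)=k+1$. Beginning with two appropriately chosen landmarks on $C_{0}$, I would then show that appending each ear $P_{i}$ requires adjoining at most two new landmarks (for example, one near each attachment vertex) in order to resolve the three kinds of pairs created: pairs internal to the new ear, pairs between the ear and the existing graph, and pairs involving the antipodal-type symmetries in the two newly formed cycles through $P_{i}$.

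The main obstacle is that inserting an ear changes shortest-path distances globally, so landmarks that previously distinguished certain pairs may cease to do so after the ear is added. Controlling this global interaction is the delicate combinatorics that made the cactus case lengthy, and it is probably what forces a separate treatment of the $\delta(G)\geq 3$ case, where the absence of degree-$2$ chains provides extra local structure. The configurations I expect to be hardest are even cycles with antipodal symmetries, theta-like subgraphs created by short parallel ears, and long degree-$2$ subpaths on which a single landmark ``sees'' many indistinguishable vertices; handling these uniformly along the ear decomposition, while preserving the inductive invariant that exactly two landmarks are added per ear, is the true heart of the argument.
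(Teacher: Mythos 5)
This statement is Conjecture~\ref{Con_dim}; the paper does not prove it, and neither do you. Your text is a plan rather than a proof, and its central step --- that in an open ear decomposition of a $2$-connected graph each added ear can be resolved by adjoining at most two new landmarks --- is exactly the part of the problem that remains open. You name the obstacle yourself (adding an ear changes shortest paths globally, so previously resolving landmarks may fail), but you do not overcome it, and no argument is given that the invariant ``two landmarks per ear'' can actually be maintained. The paper's own contribution stops short of this: it proves the (stronger, leafless) bound only for cacti and for $\delta(G)\geq3$, and reduces the general leafless case to $2$-connected graphs, which it explicitly leaves as an open problem. So the heart of your proposal is precisely the unproved conjecture restated in different language.

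A secondary gap is in your gluing step at cut vertices. Taking unions of metric generators of the blocks does \emph{not} in general yield a metric generator of $G$: pairs lying in two blocks that share a cut vertex $v$ can be indistinguishable when both lie on ``$v$-critical'' paths pointing toward $v$. The paper's Lemma~\ref{Lemma_dim} has to build an auxiliary forest $\Gamma$ on blocks and critical vertices and add up to $q-1$ extra landmarks to break all such incidences; the resulting bound $\mathrm{dim}(G)\leq\sum_i\mathrm{dim}(G_i)+p-1$ only closes because each cycle block contributes a slack of one (it needs $2=2c-0$ but one saves a vertex on it). Your assertion that ``each non-trivial block contributes independently'' skips this bookkeeping, and without it even the reduction to the $2$-connected case is not justified. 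In short: the tree base case and the arithmetic $2+2k=2c(G)$ are fine, but both the block-gluing lemma and the ear-by-ear induction require substantial arguments that are absent, and the latter is an open problem.
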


\begin{conjecture}
\label{Con_edim}Let $G$ be a connected graph. Then, $\mathrm{edim}(G)\leq
L(G)+2c(G).$
\end{conjecture}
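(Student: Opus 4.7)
The plan is a three-stage reduction, mirroring the structure the authors hint at in the abstract. First, I would remove the dependence on leaves by reducing to a leafless graph with an appropriate bookkeeping of $L(G)$. Second, within leafless graphs I would reduce to the $2$-connected case via a block decomposition argument. Third, for $2$-connected graphs I would attempt an induction on the cyclomatic number through an ear decomposition.

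For the first reduction, let $v$ be a vertex with $\ell(v) \geq 2$ threads $T_1, \ldots, T_{\ell(v)}$ hanging at it. I would include the leaves of $T_2, \ldots, T_{\ell(v)}$ in the generator, contributing $\ell(v)-1$ vertices to its size. Summed over all such $v$, this adds exactly $L(G)$ vertices. A direct check (analogous to the cactus case in \cite{SedSkreExtensionCactus}) shows that each chosen leaf distinguishes all edges on its own thread from everything else, so the problem of bounding $\mathrm{edim}(G)$ reduces to bounding $\mathrm{edim}(G')+L(G)$ where $G'$ is obtained by collapsing the threads and keeping at most one thread per branching vertex; since $c(G')=c(G)$, it suffices to prove the bound $2c(G')$ when $G'$ is leafless.

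For the second reduction, given a leafless $G$ with a cut vertex $u$, write $G = G_1 \cup G_2$ where $G_1 \cap G_2 = \{u\}$ and both $G_i$ are leafless (identifying $u$ with itself). Then $c(G) = c(G_1)+c(G_2)$. Inductively, if each $G_i$ admits an edge metric generator of size $2c(G_i)$, one shows that a suitable union of them (possibly after an adjustment at $u$) gives an edge metric generator of $G$ of size at most $2c(G_1)+2c(G_2) = 2c(G)$; the key point is that distances in $G$ between edges in different $G_i$ factor through $u$, so distinguishing within each piece is essentially enough. This shows it is sufficient to prove the bound for $2$-connected graphs, as asserted in the abstract.

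For the third stage, I would use an open ear decomposition $G = C_0 \cup P_1 \cup \cdots \cup P_k$ with $c(G)=k+1$. The base case is a cycle $C_n$, where $\mathrm{edim}(C_n)=2=2c(C_n)$ is classical. In the inductive step I would pick two carefully chosen vertices per ear: typically the interior vertex of the ear nearest to each of its two endpoints (or the endpoints themselves if the ear is a single edge), argue that these distinguish every pair of edges on $P_i$ from each other and from edges in $G_{i-1}:=C_0\cup P_1\cup\cdots\cup P_{i-1}$, and that they do not spoil distinctions already achieved in $G_{i-1}$.

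The main obstacle is exactly this third stage, and indeed it is the part the paper itself cannot resolve in full generality. Adding an ear creates new shortest paths, and two edges that were distinguished in $G_{i-1}$ by a landmark $s$ may become equidistant from $s$ in $G$ because the new ear supplies an alternative route. Consequently one cannot simply reuse an optimal generator of $G_{i-1}$ and add two vertices; a genuinely global choice of landmarks is required, and the delicate part is showing that such a choice of at most $2(k+1)$ vertices always exists regardless of how the ears interact. This is where I would expect the argument to break and why the authors restrict to $\delta(G)\geq 3$ and to cacti in the present paper, rather than settle the conjecture in full.
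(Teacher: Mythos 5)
The statement you are asked to prove is an open conjecture: the paper does not prove it (it is only known for cacti, from the cited earlier work), and neither do you --- your own third stage concedes the $2$-connected case, which is precisely where the problem remains unsolved. So this is a proof programme, not a proof, and even the two reductions you present as routine have genuine gaps. In stage one you pass to a graph $G'$ with at most one thread per branching vertex and claim it ``suffices to prove the bound $2c(G')$ when $G'$ is leafless''; but such a $G'$ is not leafless --- a vertex with exactly one thread contributes nothing to $L$ yet still carries a leaf --- and these residual threads are exactly what produce the configurations $\mathcal{B},\mathcal{C},\mathcal{D},\mathcal{E}$ that force extra generator vertices and make the bound $L(G)+2c(G)$ tight in cacti. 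Discarding them is therefore not harmless, and the claim that each chosen leaf ``distinguishes all edges on its own thread from everything else'' is essentially the necessary-but-not-sufficient BBR condition, not a complete accounting.

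In stage two, the assertion that a union of generators of $G_1$ and $G_2$ works ``possibly after an adjustment at $u$'' hides the critical-incidence phenomenon (see Example \ref{Example_criticalIncidence} and Figure \ref{Fig_Spath}): edges on the two sides of the cut vertex $u$, equidistant from every landmark, are not distinguished, and breaking each such incidence costs an extra vertex. This is why Lemma \ref{Lemma_dim} only yields $\mathrm{edim}(G)\leq\mathrm{edim}(G_1)+\cdots+\mathrm{edim}(G_q)+p-1$, and why Theorem \ref{Tm_blocks} needs the strictly stronger per-block hypothesis $\mathrm{edim}(G_i)\leq 2c(G_i)-1$ so that the $-1$ per non-cycle block absorbs the $+p-1$; with only $\mathrm{edim}(G_i)\leq 2c(G_i)$ per block, your arithmetic gives $2c(G)+p-1$, which is not the conjectured bound. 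Your diagnosis of where stage three breaks (new ears creating shortcuts that destroy distinctions already achieved) is accurate and matches the paper's own assessment, but nothing in the proposal closes that gap, so the conjecture remains exactly as open as before.
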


In this paper we further investigate these conjectures. First, we notice that
the attainment of the bounds in the class of cacti depends on the presence of
leaves in a graph and that the bounds cannot be attained by cacti without
leaves. The natural question that arises is what is the tight upper bound for
leafless cacti and does it extend to all graphs without leaves. We start the
investigation of that question by characterizing all cacti for which the first
smaller bound is attained, i.e. all cacti for which $\mathrm{dim}%
(G)=L(G)+2c(G)-1$ (resp. $\mathrm{edim}(G)=L(G)+2c(G)-1$).\ The direct
consequence is that this upper bound is attained by some leafless cacti, and
therefore it is a tight bound.

For all leafless graphs it holds that $L(G)=0,$ so the upper bound from
Conjectures \ref{Con_dim} and \ref{Con_edim} becomes $2c(G),$ and we suspect
that it cannot be attained by such graphs, just as it cannot be attained by
leafless cacti. Notice that a graph $G$ being leafless is equivalent to its
minimum degree being at least two. We state a formal conjecture that both
metric dimensions of such graphs are bounded above by $2c(G)-1.$ As a first
step towards the solution of this conjecture, we show that the upper bound
$2c(G)-1$ holds for both metric dimensions of all graphs with $\delta(G)\geq3$
and moreover the bound cannot be attained by them. This reduces the problem to
the class of graphs with $\delta(G)=2.$

Notice that graphs with $\delta(G)=2$ may have $\kappa(G)=1$ and
$\kappa(G)=2.$ For graphs $G$ with $\delta(G)=2$ and $\kappa(G)=1$ we further
show that if the upper bound $2c(G_{i})-1$ holds for a metric dimension of
every non-trivial block $G_{i}$ in $G$ distinct from a cycle, then $2c(G)-1$
is also an upper bound for the metric dimensions of $G.$ This further reduces
the problem to $2$-connected graphs, i.e. it only remains to prove that the
upper bound $2c(G)-1$ holds for metric dimensions of graphs with
$\kappa(G)=2.$ We leave this case open.

The similar results and conjectures for the mixed metric dimension are already
established in \cite{SedSkreTheta, SedSkrekMixed}.

\section{Preliminaries}

Let $G$ be a cactus graph, $C$ a cycle in $G$ and $v\in V(C).$ By $G_{v}(C)$
we denote the connected component of $G-E(C)$ which contains the vertex $v.$ A
vertex $v\in V(C)$ is said to be \emph{branch-active} if its degree is at
least $4$ or $G_{v}(C)$ contains a vertex of degree at least $3$ distinct from
$v$. Notice that in the case of $v$ being a branch-active vertex, there are
two vertices (resp. two edges) in $G_{v}(C)$ on the same distance from $v,$
which implies they will not be distinguished by a set $S\subseteq V(G)$ if
$G_{v}(C)$ does not contain a vertex from $S.$ We denote the number of
branch-active vertices on $C$ by $b(C).$

For a cactus graph $G$ with $c$ cycles $C_{1},\ldots,C_{c}$ we introduce the
notation
\[
B(G)=\sum_{i=1}^{c}\max\{0,2-b(C_{i})\}.
\]
We say that a cycle $C_{i}$ of a cactus graph is an \emph{end-cycle} if
$b(C_{i})=1.$ Notice that for every cycle $C_{i}$ of a cactus graph $G$ with
$c\geq2$ cycles, it holds that $b(C_{i})\geq1.$ Therefore, in such a graph
$B(G)\leq c$ with equality if and only if every cycle is an end-cycle.

\begin{figure}[h]
\begin{center}
\includegraphics[scale=0.75]{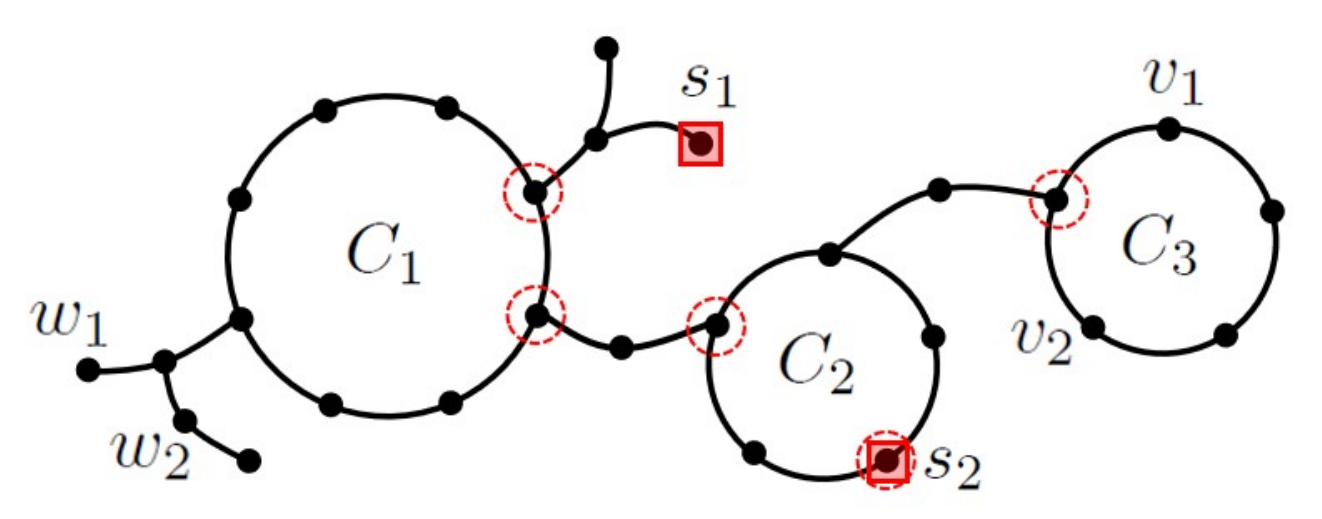}
\end{center}
\caption{A cactus graph $G$ with three cycles and a set $S=\{s_{1}%
,s_{2}\}\subseteq V(G)$ for which $S$-active vertices on each cycle are marked
by a dashed circle. The set $S$ is neither biactive, nor branch-resolving.
Since $S$ is not biactive on $C_{3}$, vertices $v_{1}$ and $v_{2}$ are not
distinguished by $S.$ And, since $S$ is not branch-resolving due to the two
$S$-free threads containing vertices $w_{1}$ and $w_{2}$ hanging at the same
vertex, vertices $w_{1}$ and $w_{2}$ are not distinguished by $S.$}%
\label{Fig_BBRset}%
\end{figure}

Let $G$ be a cactus graph, $S\subseteq V(G),$ $C$ a cycle in $G$ and $v\in
V(C).$ We say that $v$ is $S$\emph{-active} on $C$ if $G_{v}(C)$ contains a
vertex from $S.$ By $a_{S}(C)$ we denote the number of vertices on $C$ which
are $S$-active. A set $S\subseteq V(G)$ is \emph{biactive} in $G$ if every
cycle $C_{i}$ of $G$ contains at least two $S$-active vertices. An
$S$\emph{-free} thread is any thread in $G$ such that $S$ does not contain any
vertex of that thread. If for a set of vertices $S\subseteq V(G)$ it holds
that there are no two $S$-free threads in $G$ hanging at the same vertex, then
$S$ is a \emph{branch-resolving} set in $G$. Every biactive branch-resolving
set will be called shortly a \emph{BBR} set. It was established in
\cite{SedSkreExtensionCactus} that every vertex (resp. edge) metric generator
is a BBR set. The necessity of this condition is illustrated by Figure
\ref{Fig_BBRset}. Notice that for every smallest BBR set $S$ in a polycyclic
cactus graph $G$ it holds that $\left\vert S\right\vert =L(G)+B(G).$

\begin{figure}[h]
\begin{center}
\includegraphics[scale=0.75]{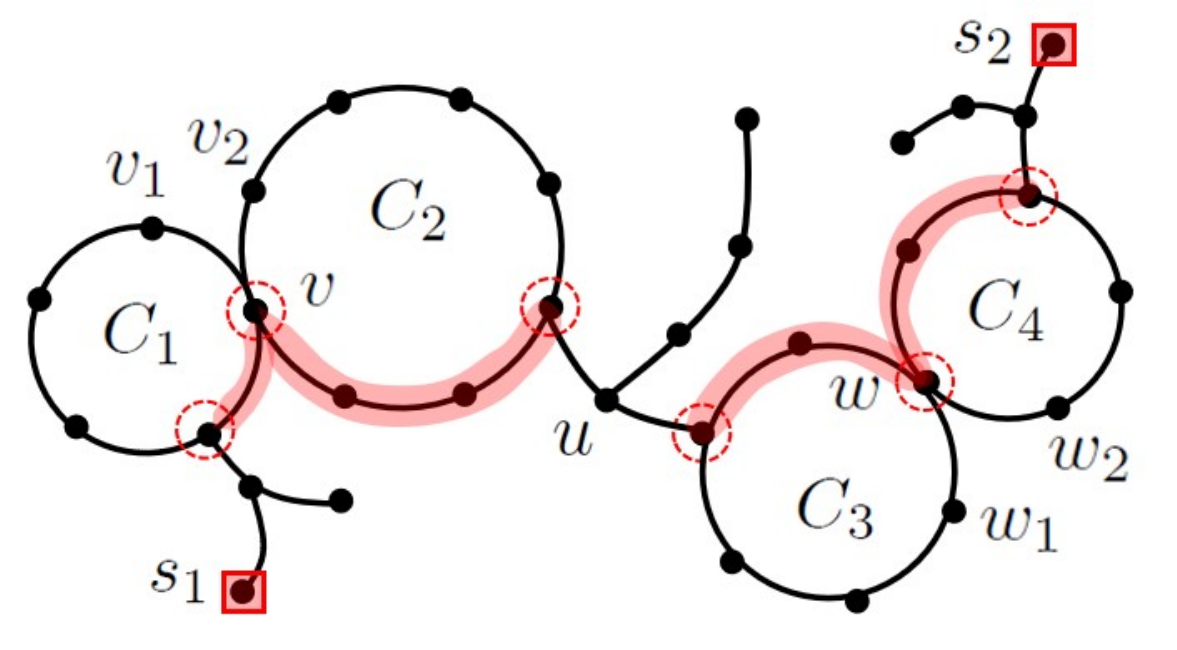}
\end{center}
\caption{A cactus graph $G$ with four cycles and a BBR set $S=\{s_{1}%
,s_{2}\}\subseteq V(G).$ On each of the four cycles the $S$-path is marked.}%
\label{Fig_Spath}%
\end{figure}

Throughout the paper for a given cactus graph $G$ and a BBR set $S\subseteq
V(G),$ we will assume that for a cycle $C$ with $V(C)=\{v_{0},\ldots
,v_{g-1}\},$ the vertices of $V(C)$ are denoted so that $v_{0}$ is $S$-active
and $k=\max\{i:v_{i}$ is $S$-active$\}$ is the smallest possible. Assuming
such labeling, the subpath of a cycle $C_{i}$ consisting of vertices
$v_{1}v_{2}\cdots v_{k-1}v_{k}$ is called an $S$\emph{-path} and denoted by
$P_{i}.$ The notion of $S$-path is illustrated by Figure \ref{Fig_Spath}.

\begin{figure}[ph]
\begin{center}
$%
\begin{array}
[c]{ll}%
\text{a) \raisebox{-1\height}{\includegraphics[scale=0.8]{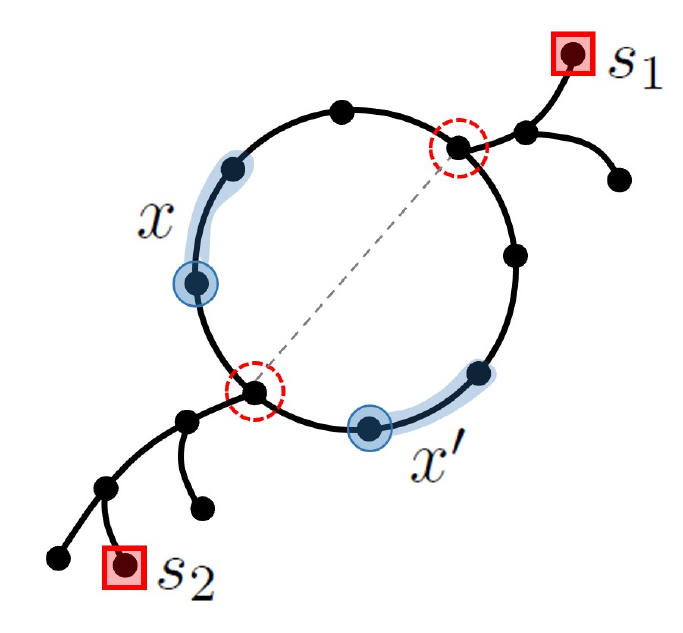}}} &
\text{b) \raisebox{-1\height}{\includegraphics[scale=0.8]{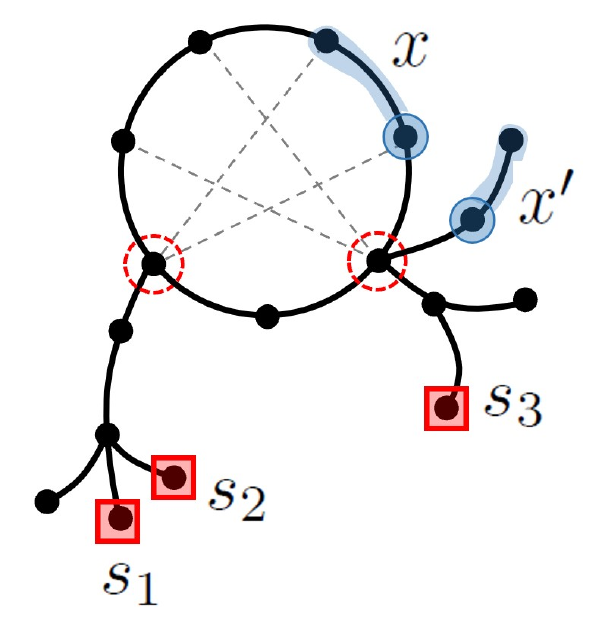}}}\\
\text{c) \raisebox{-1\height}{\includegraphics[scale=0.8]{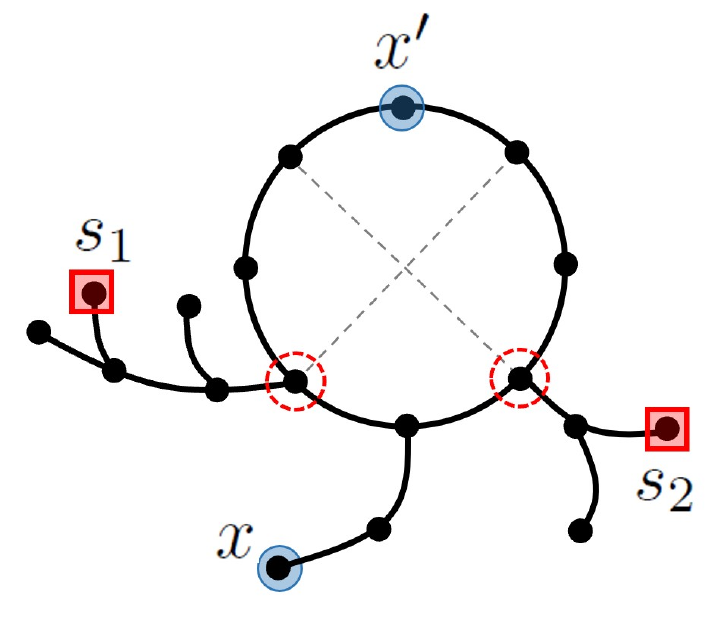}}} &
\text{d) \raisebox{-1\height}{\includegraphics[scale=0.8]{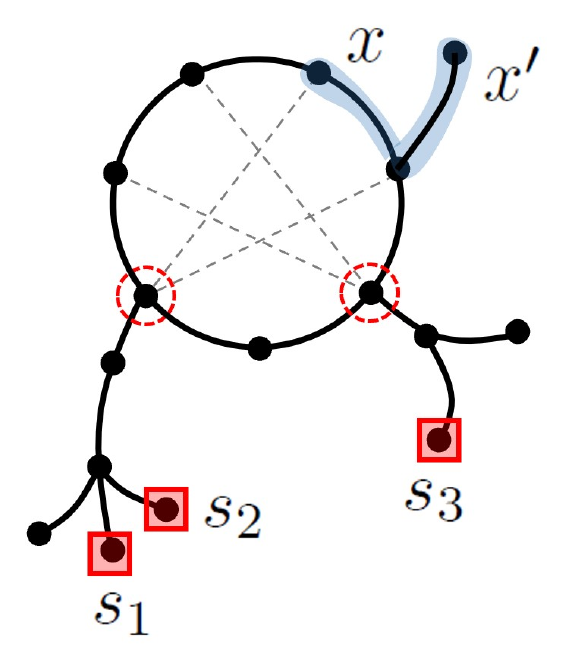}}}\\
\text{e) \raisebox{-1\height}{\includegraphics[scale=0.8]{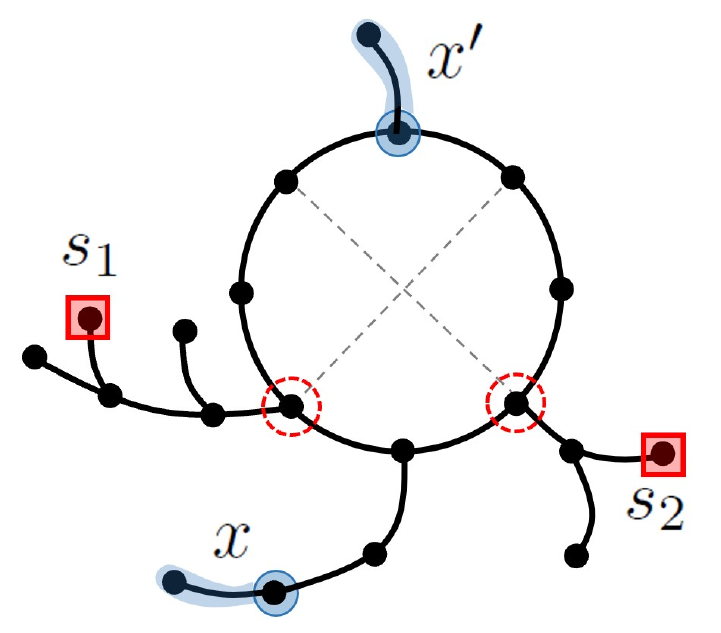}}} &
\text{f) \raisebox{-1\height}{\includegraphics[scale=0.8]{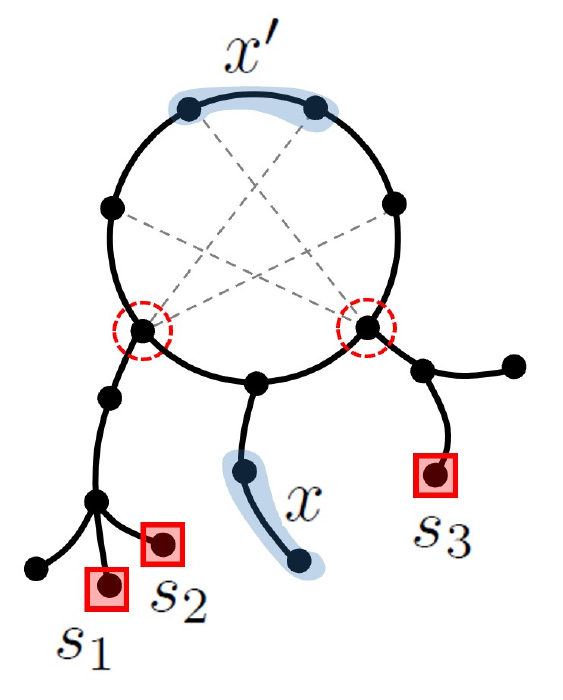}}}%
\end{array}
$
\end{center}
\caption{Each of the six graphs are unicyclic graphs with a BBR set of
vertices $S=\{s_{i}\}$ marked in them. The $S$-active vertices on the cycle
are marked by a dashed circle and dashed lines connect them to their
antipodals, as they delimit the area on cycle where the presence of an
$S$-free thread makes the configuration. The graph contains configurations: a)
$\mathcal{A}$, b) $\mathcal{B}$ and $\mathcal{D},$ c) $\mathcal{C}$, d)
$\mathcal{D},$ e) $\mathcal{E}$ on even cycle and $\mathcal{C}$, f)
$\mathcal{E}$ on odd cycle. By $x$ and $x^{\prime}$ a pair of vertices and/or
edges is denoted which is not distinguished by $S$. As illustrated by graph b)
configuration $\mathcal{B}$ is at the same time also $\mathcal{D},$ but graph
in d) shows that $\mathcal{D}$ does not have to be $\mathcal{B}.$ Similarly,
by the graph in e) it is illustrated that $\mathcal{E}$ is at the same time
$\mathcal{C},$ but the graph c) shows that the opposite does not hold, i.e.
$\mathcal{C}$ is not always $\mathcal{E}$.}%
\label{Fig_configurations}%
\end{figure}

Let us now introduce five configurations which a cycle in a cactus graph may
contain with respect to a BBR set $S.$

\begin{definition}
Let $G$ be a cactus graph, $C$ a cycle in $G$ of the length $g$, and $S$ a BBR
set in $G$. We say that the cycle $C$ \emph{with respect} to $S$
\emph{contains} configurations:

\begin{description}
\item {$\mathcal{A}$}. If $a_{S}(C)=2$, $g$ is even, and $k=g/2$;

\item {$\mathcal{B}$}. If $k\leq\left\lfloor g/2\right\rfloor -1$ and there is
an $S$-free thread hanging at a vertex $v_{i}$ for some $i\in\lbrack
k,\left\lfloor g/2\right\rfloor -1]\cup\lbrack\left\lceil g/2\right\rceil
+k+1,g-1]\cup\{0\}$;

\item {$\mathcal{C}$}. If $a_{S}(C)=2$, $g$ is even, $k\leq g/2$ and there is
an $S$-free thread of the length $\geq g/2-k$ hanging at a vertex $v_{i}$ for
some $i\in\lbrack0,k]$;

\item $\mathcal{D}$. If $k\leq\left\lceil g/2\right\rceil -1$ and there is an
$S$-free thread hanging at a vertex $v_{i}$ for some $i\in\lbrack
k,\left\lceil g/2\right\rceil -1]\cup\lbrack\left\lfloor g/2\right\rfloor
+k+1,g-1]\cup\{0\}$;

\item {$\mathcal{E}$}. If $a_{S}(C)=2$ and there is an $S$-free thread of the
length $\geq\left\lfloor g/2\right\rfloor -k+1$ hanging at a vertex $v_{i}$
with $i\in\lbrack0,k].$ Moreover, if $g$ is even, an $S$-free thread must be
hanging at the vertex $v_{j}$ with $j=g/2+k-i$.
\end{description}
\end{definition}

These configurations are illustrated by Figure \ref{Fig_configurations} and
originally introduced in \cite{SedSkreExtensionCactus}. The same figure also
illustrated why $S$ being a BBR set is only necessary, but not sufficient
condition for $S$ to be a metric generator. Consequently, when constructing a
smallest metric generator in a cactus graph $G$, one needs to start from a
smallest BBR set $S$ and then consider which of the cycles in $G$ contain one
of the configurations with respect to it, as for such cycles additional
vertices will have to be introduced into $S.$ Since it is obviously important
to consider if a cycle contains these configurations, for that purpose we
introduce the following definition.

\begin{definition}
We say that a cycle $C_{i}$ of a cactus graph $G$ is $\mathcal{ABC}%
$\emph{-negative} (resp. $\mathcal{ADE}$\emph{-negative}), if there exists a
smallest BBR set $S$ in $G$ such that $C_{i}$ does not contain any of the
configurations $\mathcal{A},$ $\mathcal{B},$ and $\mathcal{C}$ (resp.
$\mathcal{A},$ $\mathcal{D},$ and $\mathcal{E}$) with respect to $S.$
Otherwise, we say that $C_{i}$ is $\mathcal{ABC}$\emph{-positive} (resp.
$\mathcal{ADE}$\emph{-positive}). The number of $\mathcal{ABC}$-positive
(resp. $\mathcal{ADE}$-positive) cycles in $G$ is denoted by $c_{\mathcal{ABC}%
}(G)$ (resp. $c_{\mathcal{ADE}}(G)$).
\end{definition}

It is worth noting that there exists a smallest BBR set $S$ such that every
$\mathcal{ABC}$-negative (resp. $\mathcal{ADE}$-negative) does not contain the
three respective configurations with respect to $S$, as it was established in
\cite{SedSkreExtensionCactus}. There, it was also shown that the presence of
the three respective configurations on any of the cycles in $G$ is an obstacle
for $S$ to be a metric generator and that for each such cycle an additional
vertex has to be introduced to $S$ in order for it to become a metric
generator. But even that is only necessary and not sufficient condition for
$S$ to be a metric generator, as there may further occur a problem when cycles
share a vertex, for which we introduce the following definitions.

\begin{definition}
Let $G$ be a cactus graph with cycles $C_{1},\ldots,C_{c}$ and let $S$ be a
BBR set in $G.$ We say that a vertex $v\in V(C_{i})$ is \emph{vertex-critical}
(resp. \emph{edge-critical}) on $C_{i}$ with respect to $S$ if $v$ is an
end-vertex of the $S$-path $P_{i}$ and $\left\vert P_{i}\right\vert
\leq\left\lfloor g_{i}/2\right\rfloor -1$ (resp. $\left\vert P_{i}\right\vert
\leq\left\lceil g_{i}/2\right\rceil -1$).
\end{definition}

Notice that the notion of a vertex-critical and an edge-critical vertex
differs only on odd cycles.

\begin{definition}
Two distinct cycles $C_{i}$ and $C_{j}$ of a cactus graph $G$ are
\emph{vertex-critically incident} (resp. \emph{edge-critically incident}) with
respect to a BBR set $S\subseteq V(G)$ if $C_{i}$ and $C_{j}$ share a vertex
$v$ which is vertex-critical (resp. edge-critical) with respect to $S$ on both
$C_{i}$ and $C_{j}$.
\end{definition}

For illustration and motivation of these notions, consider the following example.

\begin{example}
\label{Example_criticalIncidence}Let $G$ be a graph and $S=\{s_{1},s_{2}\}$ a
set of vertices in $G$ as shown in Figure \ref{Fig_Spath}. None of the four
cycles in $G$ contains any of the five configurations. Vertex $v$ shared by
cycles $C_{1}$ and $C_{2}$ is vertex-critical on both cycles and it is also
edge-critical on both cycles. Therefore, cycles $C_{1}$ and $C_{2}$ are both
vertex- and edge-critically incident. Consequently, the pair of vertices
$v_{1}$ and $v_{2}$ and also the pair of edges $v_{1}v$ and $v_{2}v$ are not
distinguished by $S.$

On the other hand, the vertex $w$ shared by cycles $C_{3}$ and $C_{4}$ is
vertex-critical on both cycles, but edge-critical only on the cycle $C_{3}$
and not on $C_{4}.$ Therefore, cycles $C_{3}$ and $C_{4}$ are only
vertex-critically incident and not edge-critically incident. A consequence of
this is that the pair of vertices $w_{1}$ and $w_{2}$ on these two cycles is
not distinguished by $S,$ there is no a pair of indistingushed edges (notice
that $w_{1}w$ and $w_{2}w$ are distinguished by $S$).
\end{example}

A set $S\subseteq V(G)$ is a \emph{vertex cover} if it contains a least one
end-vertex of every edge in $G.$ The cardinality of a smallest vertex cover in
$G$ is the \emph{vertex cover number} of $G$ denoted by $\tau(G).$ Further,
let $G$ be a cactus graph and $S$ a smallest BBR set in $G,$ we say that $S$
is \emph{nice} if every $\mathcal{ABC}$-negative (resp. $\mathcal{ADE}%
$-negative) cycle $C_{i}$ in $G$ does not contain the three configurations
with respect to $S$ and the number of pairs of vertex-critically (resp.
edge-critically) incident cycles with respect to $S$ is the smallest possible.

Now, we define the \emph{vertex-incident graph} $G_{vi}$ (resp.
\emph{edge-incident graph} $G_{ei}$) as a graph containing a vertex for every
cycle in $G,$ where two vertices are adjacent if the corresponding cycles in
$G$ are $\mathcal{ABC}$-negative and vertex-critically incident (resp.
$\mathcal{ADE}$-negative and edge-critically incident) with respect to a nice
BBR set $S$. These notions are illustrated by the following example.

\begin{example}
Let $G$ be a graph and $S$ a set of vertices in $G$ as shown in Figure
\ref{Fig_Spath}. The vertex set of both $G_{vi}$ and $G_{ei}$ for the graph
$G$ is the same and consists of four vertices corresponding to the four cycles
in $G,$ i.e. $V(G_{vi})=V(G_{ei})=\{c_{1},c_{2},c_{3},c_{4}\}.$ Graphs
$G_{vi}$ and $G_{ei}$ differ in the set of edges, where $E(G_{vi}%
)=\{c_{1}c_{2},c_{3}c_{4}\}$ and $E(G_{ei})=\{c_{1}c_{2}\}.$
\end{example}

Now we can finally state the main results from \cite{SedSkreExtensionCactus}
which we need in this paper.

\begin{theorem}
\label{Tm_dim}Let $G$ be a cactus graph. Then
\[
\mathrm{dim}(G)=L(G)+B(G)+c_{\mathcal{ABC}}(G)+\tau(G_{vi}),
\]
and
\[
\mathrm{edim}(G)=L(G)+B(G)+c_{\mathcal{ADE}}(G)+\tau(G_{ei}).
\]

\end{theorem}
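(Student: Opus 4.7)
The plan is to prove both identities in parallel, since the vertex and edge cases differ only by substituting $\mathcal{ABC}$ for $\mathcal{ADE}$ and $G_{vi}$ for $G_{ei}$. Each identity will be established as a matching pair of inequalities, and the upper bound is where the real work lies.

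For the lower bound I start from the fact, cited from \cite{SedSkreExtensionCactus}, that every vertex (resp.\ edge) metric generator $S$ must be a BBR set, which already gives $|S|\geq L(G)+B(G)$. For each $\mathcal{ABC}$-positive (resp.\ $\mathcal{ADE}$-positive) cycle $C_i$, every smallest BBR set leaves one of the three configurations on $C_i$, and each such configuration, by the analysis accompanying Figure \ref{Fig_configurations}, produces a pair of vertices (resp.\ edges) on $C_i$ distinguishable only by an additional vertex placed inside some branch $G_v(C_i)$; this forces the $c_{\mathcal{ABC}}(G)$ (resp.\ $c_{\mathcal{ADE}}(G)$) extra vertices. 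Finally, every pair of vertex-critically (resp.\ edge-critically) incident cycles sharing a cut vertex contributes an indistinguished pair exactly as in Example \ref{Example_criticalIncidence}, and resolving all such pairs requires the cycles receiving extra vertices to form a vertex cover of $G_{vi}$ (resp.\ $G_{ei}$), giving the remaining $\tau$-term.

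For the upper bound I would construct a generator of the claimed size explicitly. Starting from a nice smallest BBR set $S_0$ of size $L(G)+B(G)$, I adjoin one carefully chosen vertex in an appropriate branch $G_v(C_i)$ for each $\mathcal{ABC}$-positive cycle, to destroy the residual configuration, and one further vertex in a branch at the critical cut vertex for each cycle in a minimum vertex cover of $G_{vi}$. The resulting set $S$ has the required cardinality, and the substantive task is to verify that $S$ is a vertex metric generator. This is done by case analysis on where an indistinguished pair could live: within a single thread (excluded by branch-resolvingness), on the same cycle (excluded by biactivity together with the absence of $\mathcal{A}$, $\mathcal{B}$, $\mathcal{C}$), across two blocks meeting at a non-critical cut vertex (excluded by the $S$-path labelling), and across two critically incident cycles (excluded by the vertex cover addition).

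The principal obstacle is this verification, and in particular the book-keeping that lets a single extra vertex per $\mathcal{ABC}$-positive cycle and per cover element simultaneously destroy all local obstructions without creating new ones. The cactus structure is decisive here: because cycles are pairwise edge-disjoint, a vertex placed in a branch $G_v(C_i)$ interacts with the rest of $G$ only through the cut vertex $v$, so the effects of the extra vertices on different cycles are genuinely independent. Combined with the niceness of $S_0$, this localisation is what makes the four counted contributions $L(G)$, $B(G)$, $c_{\mathcal{ABC}}(G)$ and $\tau(G_{vi})$ add without redundancy. The edge version follows the same script, with the only genuine divergence appearing on odd cycles, where vertex- and edge-criticality part company.
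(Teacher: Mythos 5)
You should first note that the paper does not actually prove Theorem~\ref{Tm_dim}: it is imported verbatim from \cite{SedSkreExtensionCactus} (``Now we can finally state the main results from \cite{SedSkreExtensionCactus} which we need in this paper''), so there is no in-paper argument to match your proposal against. Judged on its own merits, your upper-bound half is the right construction (nice smallest BBR set, one vertex per $\mathcal{ABC}$-positive cycle, one vertex per element of a minimum vertex cover of $G_{vi}$, then a case analysis using edge-disjointness of cycles to localise the verification), and it is essentially what one expects the cited proof to do.

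The genuine gap is in the lower bound. You write as if an arbitrary vertex metric generator $S$ decomposes into a smallest BBR set of size $L(G)+B(G)$ plus separately designated ``extra'' vertices, one per $\mathcal{ABC}$-positive cycle and one per cover element. That decomposition is not available: a BBR set of size $L(G)+B(G)+k$ need not contain a smallest BBR set as a subset, and its $k$ surplus vertices carry no labels assigning them to particular cycles. What must be shown is that for \emph{every} metric generator $S$ the surplus $|S|-L(G)-B(G)$ is at least $c_{\mathcal{ABC}}(G)+\tau(G_{vi})$, and this requires a local charging argument (each thread and each cycle is charged a portion of $S$ lying in its branches, and the charges are shown not to overlap because cycles of a cactus are edge-disjoint), together with an argument that the presence of one of the configurations cannot be repaired ``for free'' by surplus vertices sitting elsewhere. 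The $\tau(G_{vi})$ term is additionally delicate because $G_{vi}$ is defined relative to a \emph{nice} smallest BBR set, whereas $S$ is arbitrary; your sketch never connects the two. The observation that the $c_{\mathcal{ABC}}$-charges and the $\tau$-charges land on disjoint sets of cycles (only $\mathcal{ABC}$-negative cycles have neighbours in $G_{vi}$) is also needed for additivity and is missing. Without these steps the claimed equality is only an upper bound.
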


The direct consequence of the exact formulas for metric dimensions of cacti is
the following simple upper bound for both metric dimensions, also from
\cite{SedSkreExtensionCactus}.

\begin{corollary}
\label{Cor_bound}Let $G$ be a cactus graph with $c\geq2$ cycles. Then
\[
\mathrm{dim}(G)\leq L(G)+2c\text{ (resp. }\mathrm{edim}(G)\leq L(G)+2c\text{)}%
\]
with equality holding if and only if every cycle in $G$ is an $\mathcal{ABC}%
$-positive (resp. $\mathcal{ADE}$-positive) end-cycle.
\end{corollary}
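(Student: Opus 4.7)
The plan is to derive the inequality straight from the exact formula of Theorem~\ref{Tm_dim}, bounding each of the three auxiliary terms $B(G)$, $c_{\mathcal{ABC}}(G)$, $\tau(G_{vi})$, and then to track when each of those bounds is tight for the equality part. Writing $\mathrm{dim}(G)=L(G)+B(G)+c_{\mathcal{ABC}}(G)+\tau(G_{vi})$, the whole statement reduces to showing $B(G)+c_{\mathcal{ABC}}(G)+\tau(G_{vi})\leq 2c$ with equality precisely when every cycle is an $\mathcal{ABC}$-positive end-cycle.

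First I would invoke the remark already made after the definition of $B(G)$: since $c\geq 2$, every cycle satisfies $b(C_i)\geq 1$, so $B(G)\leq c$ with equality exactly when every cycle is an end-cycle. Second, I would bound $c_{\mathcal{ABC}}(G)+\tau(G_{vi})\leq c$ by the following one-line argument. By the definition of $G_{vi}$, every edge of $G_{vi}$ has both endpoints corresponding to $\mathcal{ABC}$-negative cycles, so the set $N$ of such vertices is itself a vertex cover of $G_{vi}$, giving $\tau(G_{vi})\leq|N|=c-c_{\mathcal{ABC}}(G)$. Adding the two estimates yields $B(G)+c_{\mathcal{ABC}}(G)+\tau(G_{vi})\leq 2c$, which combined with Theorem~\ref{Tm_dim} produces the required upper bound.

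For the equality characterization, the bound is attained if and only if both inequalities above are tight. The first is tight iff every cycle is an end-cycle. For the second, note that every vertex outside $N$ is isolated in $G_{vi}$, so $\tau(G_{vi})=\tau(G_{vi}[N])=|N|-\alpha(G_{vi}[N])$; if $N\neq\emptyset$, then $\alpha(G_{vi}[N])\geq 1$ (any single vertex is independent), so $\tau(G_{vi})\leq|N|-1$ and the second inequality is strict. Hence tightness forces $N=\emptyset$, i.e.\ every cycle is $\mathcal{ABC}$-positive. The $\mathrm{edim}$ statement is the verbatim analogue with $\mathcal{ADE}$, $c_{\mathcal{ADE}}(G)$ and $G_{ei}$ replacing their vertex counterparts.

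I do not anticipate any genuine obstacle: the whole argument is an elementary rearrangement of Theorem~\ref{Tm_dim} combined with the classical identity $\tau(H)+\alpha(H)=|V(H)|$. The only point worth double-checking is the use of $c\geq 2$, which is genuinely needed for $B(G)\leq c$ since a bare cycle has $b(C_1)=0$ and $B(G)=2$.
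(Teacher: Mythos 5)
Your proof is correct and follows the route the paper intends: the corollary is stated as a direct consequence of Theorem~\ref{Tm_dim}, and the two bounds you use ($B(G)\leq c$ with equality iff every cycle is an end-cycle, and $c_{\mathcal{ABC}}(G)+\tau(G_{vi})\leq c$ because only $\mathcal{ABC}$-negative cycles can be incident to edges of $G_{vi}$) are exactly the ones the authors invoke in the proof of Proposition~\ref{Prop_nearlyExtremal}. Your Gallai-identity observation that $N\neq\emptyset$ forces $\tau(G_{vi})\leq|N|-1$ cleanly handles the equality case and matches the paper's remark that $\tau(G_{vi})>0$ implies $c_{\mathcal{ABC}}(G)+\tau(G_{vi})<c$.
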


\section{From leafless cacti to leafless general graphs}

Considering further graphs for which the metric dimensions are bounded above
by $L(G)+2c(G),$ notice that according to Corollary \ref{Cor_bound}, an
attainment of the bound in cacti depends on every cycle being $\mathcal{ABC}%
$-positive (resp. $\mathcal{ADE}$-positive) and an end-cycle. The presence of
the configurations $\mathcal{B},$ $\mathcal{C},$ $\mathcal{D}$ and
$\mathcal{E}$ in cacti by definition implies the existence of threads hanging
at a cycle, and therefore leaves. We conclude that cycles in a leafless cactus
graph cannot contain these configurations. As for configuration $\mathcal{A}$,
a cycle in a leafless cactus graph may contain this configuration, but only if
it is not an end-cycle, which means that the bound $L(G)+2c(G)$ again cannot
be attained by such a cactus graph.

The fact that leafless cacti do not attain the bound might be interesting when
considering general connected graphs without leaves, i.e. all graphs in which
$\delta(G)\geq2.$ This motivates us to investigate which cacti are nearly
extremal, i.e. for which $\mathrm{dim}(G)=L(G)+2c-1$ (resp. $\mathrm{edim}%
(G)=L(G)+2c-1$).

\begin{proposition}
\label{Prop_nearlyExtremal}Let $G$ be a cactus graph with $c\geq2$ cycles.
Then $\mathrm{dim}(G)=L(G)+2c-1$ if and only if one of the following holds:

\begin{enumerate}
\item every cycle in $G$ is an end-cycle, at most $c-1$ cycles are
$\mathcal{ABC}$-positive and all remaining cycles are pairwise
vertex-critically incident;

\item precisely $c-1$ cycles in $G$ are end-cycles and every cycle in $G$ is
$\mathcal{ABC}$-positive.
\end{enumerate}
\end{proposition}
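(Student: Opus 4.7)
The plan is to apply Theorem~\ref{Tm_dim}, which reduces the equality $\mathrm{dim}(G) = L(G) + 2c - 1$ to the single identity
\[
B(G) + c_{\mathcal{ABC}}(G) + \tau(G_{vi}) = 2c - 1,
\]
and then bound the two summands $B(G)$ and $c_{\mathcal{ABC}}(G) + \tau(G_{vi})$ separately by $c$, so that only two splits are possible. For the first summand, $c \geq 2$ forces $b(C_i) \geq 1$ on every cycle, so each $\max\{0, 2 - b(C_i)\} \in \{0, 1\}$ and $B(G)$ equals the number of end-cycles; in particular $B(G) = c$ iff every cycle is an end-cycle and $B(G) = c - 1$ iff exactly one cycle fails to be an end-cycle. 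For the second summand, edges of $G_{vi}$ connect only $\mathcal{ABC}$-negative cycles (by definition), so each $\mathcal{ABC}$-positive cycle is isolated in $G_{vi}$; combined with the identity $\tau(G_{vi}) + \alpha(G_{vi}) = c$ this yields $c_{\mathcal{ABC}}(G) + \tau(G_{vi}) \leq c$. Hence the only splits of $2c - 1$ are $(B(G),\, c_{\mathcal{ABC}}(G) + \tau(G_{vi})) = (c, c - 1)$ and $(c - 1, c)$.

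In the first split every cycle is an end-cycle. Letting $H$ denote the subgraph of $G_{vi}$ induced by the $\mathcal{ABC}$-negative cycles, we have $\tau(G_{vi}) = \tau(H)$, and the identity $\tau(H) + \alpha(H) = |V(H)|$ rewrites $c_{\mathcal{ABC}}(G) + \tau(G_{vi}) = c - 1$ as $\alpha(H) = 1$. This forces $V(H)$ to be non-empty (so at most $c - 1$ cycles are $\mathcal{ABC}$-positive) and $H$ to be complete (so all $\mathcal{ABC}$-negative cycles are pairwise vertex-critically incident), yielding condition~(1). In the second split exactly $c - 1$ cycles are end-cycles, and analogously $c_{\mathcal{ABC}}(G) + \tau(G_{vi}) = c$ rewrites to $\alpha(H) = 0$, i.e.\ $V(H) = \emptyset$, so every cycle is $\mathcal{ABC}$-positive, yielding condition~(2). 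The reverse implications follow by running these equalities backwards.

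The argument is essentially bookkeeping once Theorem~\ref{Tm_dim} is invoked; the only step requiring care is the isolation of $\mathcal{ABC}$-positive cycles in $G_{vi}$, which comes directly from its definition via a \emph{nice} BBR set and is what cleanly separates the second summand into a free part $c_{\mathcal{ABC}}(G)$ and a coverable part $\tau(H)$. No deeper obstacle is expected.
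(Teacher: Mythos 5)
Your proposal is correct and follows essentially the same route as the paper: both reduce the equality to $B(G)+c_{\mathcal{ABC}}(G)+\tau(G_{vi})=2c-1$ via Theorem~\ref{Tm_dim}, bound each summand by $c$, and analyze the two resulting splits. The only cosmetic difference is that you package the second summand using the Gallai identity $\tau(H)+\alpha(H)=|V(H)|$ on the subgraph induced by the $\mathcal{ABC}$-negative cycles, whereas the paper uses the equivalent fact that $\tau=n-1$ exactly for complete graphs.
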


\begin{proof}
Notice that in a cactus graph $G$ with at least two cycles, it holds that
$b(C_{i})\geq1$ for every cycle $C_{i}$ in $G,$ and therefore $B(G)\leq c.$
Also, by definition a vertex of $G_{vi}$ is incident to an edge in $G_{vi}$
only if it corresponds to an $\mathcal{ABC}$-negative cycle in $G,$ which
implies $c_{\mathcal{ABC}}(G)+\tau(G_{vi})\leq c.$ Therefore, for a cactus
graph $G$ the equality $\mathrm{dim}(G)=L(G)+2c-1$ will hold if and only if
either $B(G)=c$ and $c_{\mathcal{ABC}}(G)+\tau(G_{vi})=c-1$ or $B(G)=c-1$ and
$c_{\mathcal{ABC}}(G)+\tau(G_{vi})=c.$

Since the vertex cover number of any graph $G$ with $n$ vertices is at most
$n-1,$ this implies $\tau(G_{vi})<c.$ Also, vertex cover number in a graph $G$
with $n$ vertices will be equal to $n-1$ if and only if $G=K_{n}.$ This
further implies that if $G_{vi}$ is a graph on $q$ vertices (which correspond
to $q$ cycles in $G$), then $\tau(G_{vi})=q-1$ if and only if $G_{vi}$ is a
complete graph which is further equivalent to all $q$ corresponding cycles in
$G$ being pairwise vertex-critically incident. One useful consequence of this
observation is that $\tau(G_{vi})>0$ implies $c_{\mathcal{ABC}}(G)+\tau
(G_{vi})<c.$ Finally, for $\tau(G_{vi})$ to be strictly positive, there must
exist in $G$ at least two vertex-critically incident cycles.

Now we can consider separately the two conditions under which the equality
$\mathrm{dim}(G)=L(G)+2c-1$ will hold:

\begin{itemize}
\item $B(G)=c$\emph{ and }$c_{\mathcal{ABC}}(G)+\tau(G_{vi})=c-1.$ This
happens if and only if every cycle in $G$ is an end-cycle and either
$c_{\mathcal{ABC}}(G)=c-1$ or $c_{\mathcal{ABC}}(G)\leq c-2$ and all remaining
cycles in $G$ which are not $\mathcal{ABC}$-positive are pairwise
vertex-critically incident;

\item $B(G)=c-1$\emph{ and }$c_{\mathcal{ABC}}(G)+\tau(G_{vi})=c.$ This
happens if and only if precisely $c-1$ cycles in $G$ are end cycles and every
cycle in $G$ is $\mathcal{ABC}$-positive.
\end{itemize}
\end{proof}

Notice that in Proposition \ref{Prop_nearlyExtremal}.1, when a cactus graph
$G$ contains precisely $c-1$ cycles which are $\mathcal{ABC}$-positive, the
requirement for the vertex-critical incidence does not really apply. So there
are really no additional requirements on the remaining cycle. For the
illustration of the above result, let us consider the following example.

\begin{example}
Let $G_{1}$, $G_{2}$ and $G_{3}$ be cacti from a), b) and c) of Figure
\ref{Fig_threeGraphs}, respectively. For each $G_{i}$ it holds that
$c(G_{i})=3$ and $L(G_{i})=1.$

\begin{enumerate}
\item In $G_{1}$ all three cycles are end-cycles, cycles $C_{1}$ and $C_{2}$
contain configuration $\mathcal{B}$ due to a thread hanging at their only
branch-active vertex and therefore they are $\mathcal{ABC}$-positive. Cycle
$C_{3}$ does not contain any of the configurations and therefore it is
$\mathcal{ABC}$-negative. According to Proposition \ref{Prop_nearlyExtremal}.1
we conclude $\mathrm{dim}(G_{1})=L(G_{1})+2c-1=6.$

\item In $G_{2}$ all three cycles are end-cycles, cycle $C_{1}$ contains
configuration $\mathcal{B}$ due to a thread hanging at its only branch-active
vertex and therefore it is $\mathcal{ABC}$-positive, cycles $C_{2}$ and
$C_{3}$ do not contain any of the configurations and therefore they are
$\mathcal{ABC}$-negative, but they are vertex-critically incident. Similarly,
according to Proposition \ref{Prop_nearlyExtremal}.2 we conclude
$\mathrm{dim}(G_{2})=L(G_{2})+2c-1=6.$

\item In $G_{3}$ cycles $C_{1}$ and $C_{3}$ are end-cycles, but $C_{2}$ is
not. All three cycles contain configuration $\mathcal{B}$ and therefore
$\mathcal{ABC}$-positive. In a similar way, according to Proposition
\ref{Prop_nearlyExtremal}.3, we conclude $\mathrm{dim}(G_{3})=L(G_{3}%
)+2c-1=6.$
\end{enumerate}
\end{example}

\begin{figure}[ph]
\begin{center}
$%
\begin{array}
[c]{l}%
\text{a) \raisebox{-1\height}{\includegraphics[scale=0.8]{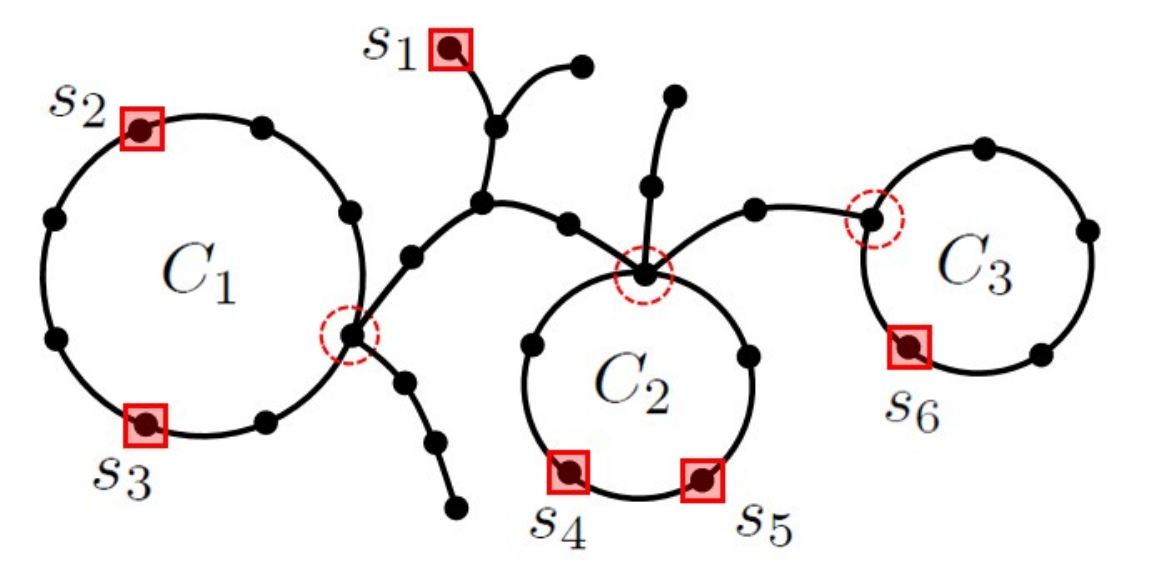}}}\\
\text{b) \raisebox{-1\height}{\includegraphics[scale=0.8]{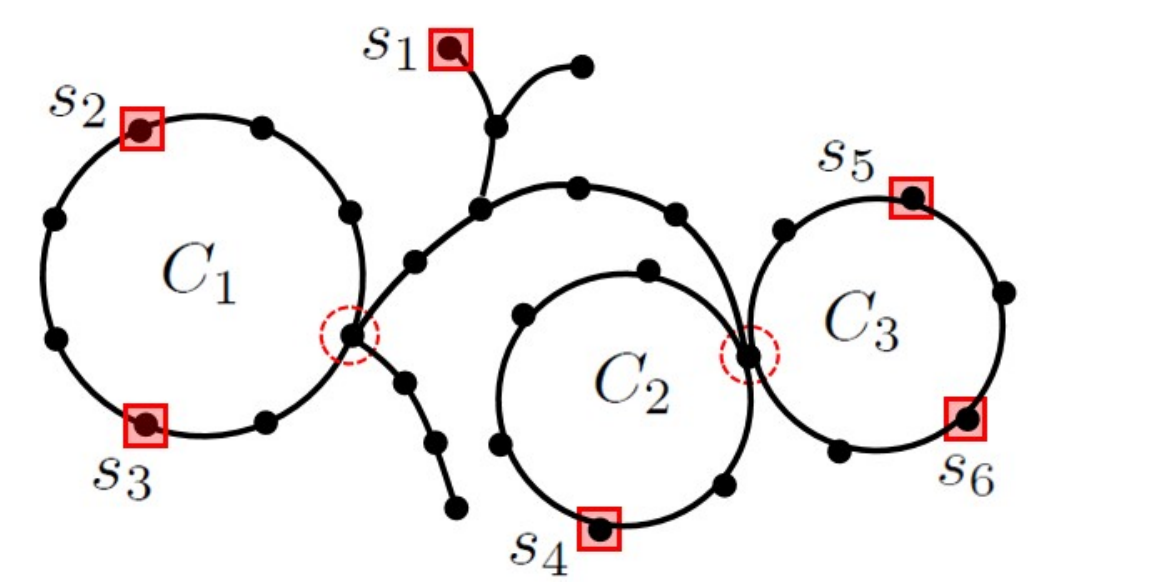}}}\\
\text{c) \raisebox{-1\height}{\includegraphics[scale=0.8]{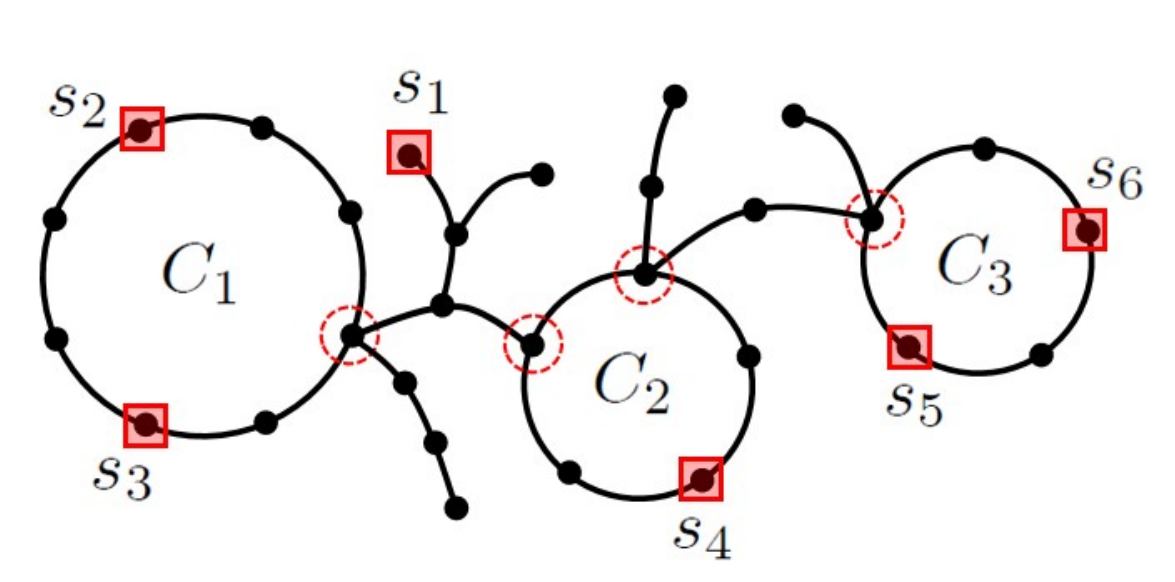}}}%
\end{array}
$
\end{center}
\caption{Three distinct cacti $G_{1},$ $G_{2}$ and $G_{3}$, each with $c=3$
cycles, $L(G_{i})=1$ and $\mathrm{dim}(G_{i})=L(G_{i})+2c-1=6.$ A smallest
vertex metric generator $S=\{s_{1},s_{2},\ldots\}$ is marked in each of the
graphs.}%
\label{Fig_threeGraphs}%
\end{figure}

In the light of Proposition \ref{Prop_nearlyExtremal}, we can now consider
leafless cacti, which might be an indication what happens for all graphs with
$\delta(G)\geq2$. We first need to introduce a special class of leafless
cacti. If a graph $G$ is comprised of cycles which all share one vertex, then
we say $G$ is a \emph{daisy} graph. A cycle of a daisy graph is also called a
\emph{petal}. The \emph{center} of a daisy graph $G$ is the only vertex from
$G$ of degree $>2$. Notice that a daisy graph by definition is a cactus graph
without leaves. An example of a daisy graph is shown in Figure \ref{Fig_daisy}.

\begin{figure}[h]
\begin{center}
$%
\begin{array}
[c]{ll}%
\text{a) \raisebox{-1\height}{\includegraphics[scale=0.8]{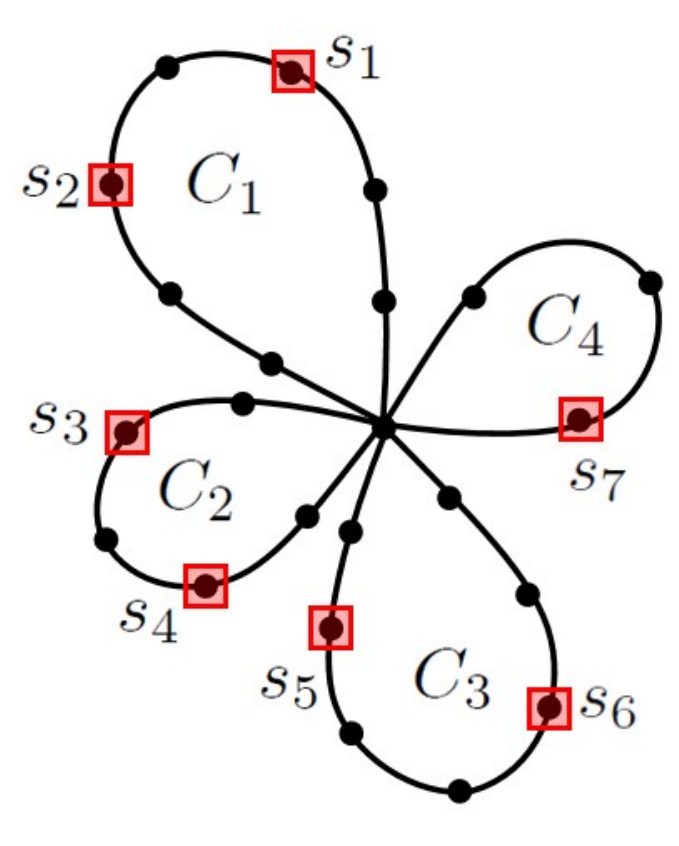}}} &
\text{b) \raisebox{-1\height}{\includegraphics[scale=0.8]{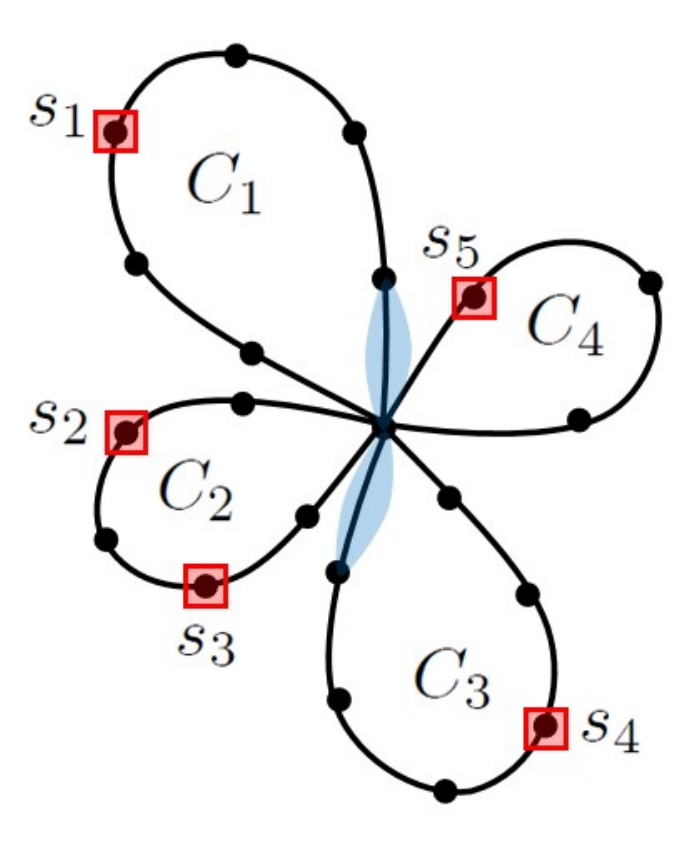}}}%
\end{array}
$
\end{center}
\caption{Two distinct daisy graphs $G$, each with four petals and a set of
vertices $S=\{s_{1},s_{2},\ldots\}$ marked in them. In a daisy graph: a) all
petals are even, b) petals $C_{1}$ and $C_{3}$ are odd, $C_{2}$ and $C_{4}$
are even. The set $S$ is: a) both a vertex and an edge metric generator, b)
only a vertex metric generator (the undistinguished pair of edges is marked in
a graph).}%
\label{Fig_daisy}%
\end{figure}

\begin{proposition}
\label{Prop_leflessDim}Let $G$ be a cactus graph with $c\geq2$ cycles and
without leaves. Then $\mathrm{dim}(G)\leq2c-1$ with equality if and only if
$G$ is a daisy graph without odd petals.
\end{proposition}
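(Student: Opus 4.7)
Since $G$ is leafless, $L(G)=0$, so Corollary \ref{Cor_bound} gives $\mathrm{dim}(G)\leq 2c$ with equality precisely when every cycle is an $\mathcal{ABC}$-positive end-cycle. I would first strengthen this to $\mathrm{dim}(G)\leq 2c-1$ by showing that no end-cycle of a leafless cactus can be $\mathcal{ABC}$-positive: configurations $\mathcal{B}$ and $\mathcal{C}$ require $S$-free threads and so are impossible, while configuration $\mathcal{A}$ on an end-cycle $C$ with unique branch-active vertex $v$ arises only if the smallest BBR set places its single $S$-vertex on $C$ antipodally to $v$, and this can always be avoided by a different placement.

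To characterize $\mathrm{dim}(G)=2c-1$, I would apply Proposition \ref{Prop_nearlyExtremal}. Case 2 there requires every cycle of $G$ to be $\mathcal{ABC}$-positive, but the $c-1$ end-cycles guaranteed by that case are $\mathcal{ABC}$-negative by the previous observation, so Case 2 cannot hold. Therefore Case 1 applies: every cycle is an end-cycle, no cycle is $\mathcal{ABC}$-positive, and all $c$ cycles are pairwise vertex-critically incident; in particular every pair of cycles shares a vertex.

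I would then prove a Helly-type property for cactus cycles. If three cycles $C_1,C_2,C_3$ pairwise share distinct vertices $v_{12},v_{13},v_{23}$, then joining a suitable arc in $C_1$ from $v_{12}$ to $v_{13}$, in $C_3$ from $v_{13}$ to $v_{23}$, and in $C_2$ from $v_{23}$ to $v_{12}$ produces an additional cycle whose edges lie in $C_1\cup C_2\cup C_3$, contradicting the edge-disjointness of cactus cycles. By iteration, all $c$ pairwise intersecting cycles share one common vertex $v^*$; since $G$ is leafless and every cycle is an end-cycle whose unique cut vertex must be $v^*$, $G$ is a daisy with center $v^*$.

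The remaining and most delicate step is to rule out odd petals. Fix a nice smallest BBR set $S$. On an even petal $C_i$, niceness forces the $S$-vertex $s_i$ non-antipodally to $v^*$ (so that $\mathcal{A}$ does not occur on the $\mathcal{ABC}$-negative petal $C_i$), which places $s_i$ at distance $\leq \lfloor g_i/2\rfloor-1$ from $v^*$ and makes $v^*$ vertex-critical on $C_i$. On an odd petal $C_i$ one can always place $s_i$ at distance exactly $\lfloor g_i/2\rfloor$ from $v^*$; since niceness also minimizes the number of vertex-critically incident pairs, this placement is preferred and $v^*$ fails to be vertex-critical on $C_i$, isolating $C_i$ in $G_{vi}$. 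A single odd petal therefore drops $\tau(G_{vi})$ below $c-1$, giving $\mathrm{dim}(G)=c+\tau(G_{vi})<2c-1$, while a daisy with all even petals yields $G_{vi}=K_c$ and $\tau(G_{vi})=c-1$, hence $\mathrm{dim}(G)=2c-1$. The principal difficulty is reconciling the two requirements of niceness in this last step to see that odd petals cannot coexist with the pairwise vertex-critical incidence demanded by Case 1.
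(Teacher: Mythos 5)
Your proposal is correct and follows essentially the same route as the paper's proof: rule out all configurations except $\mathcal{A}$ on leafless end-cycles (and avoid $\mathcal{A}$ by non-antipodal placement) to get $\mathrm{dim}(G)\leq 2c-1$, reduce the equality case via Proposition \ref{Prop_nearlyExtremal} to ``all cycles are end-cycles and pairwise vertex-critically incident,'' deduce the daisy structure, and separate odd from even petals by the $\lfloor g_i/2\rfloor$ versus $\lfloor g_i/2\rfloor-1$ threshold for $\lvert P_i\rvert$. Your explicit Helly-type argument for the common vertex and the explicit computation $\mathrm{dim}(G)=c+\tau(G_{vi})$ are details the paper leaves implicit, and the ``reconciliation of niceness'' you flag as delicate is in fact unproblematic since an odd petal cannot carry configuration $\mathcal{A}$.
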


\begin{proof}
First, notice that for a leafless graph $G$ it holds that $L(G)=0.$ Therefore,
the bound $L(G)+2c$ for leafless cacti becomes $2c.$ According to Corollary
\ref{Cor_bound}, this bound is attained if and only if every cycle in $G$ is
an $\mathcal{ABC}$-positive end-cycle. The definition of the five
configurations implies the existence of a thread, and therefore a leaf, in $G$
for all configurations except $\mathcal{A}.$ Consequently, any cycle in a
leafless cactus graph can contain only configuration $\mathcal{A},$ but then
$C$ must not be an end-cycle, so according to Corollary \ref{Cor_bound} the
bound $2c$ cannot be attained in a class of leafless cacti, which implies
$\mathrm{dim}(G)\leq2c-1.$

Next, we investigate if this new bound $2c-1$ is attained by some leafless
cactus graph. Recall again that a cycle in a leafless cactus graph $G$ can
contain only configuration $\mathcal{A}$ and that only on a cycle which is not
an end-cycle. Proposition \ref{Prop_nearlyExtremal} implies that for a
leafless cactus graph $G$ it holds that $\mathrm{dim}(G)=2c-1$ if and only if
every cycle in $G$ is an end-cycle, at most $c-2$ cycles are $\mathcal{ABC}%
$-positive and all remaining cycles are pairwise vertex-critically incident.
To be more precise, since an end-cycle in $G$ cannot contain any of the
configurations, this characterization needs to be interpreted as
$\mathrm{dim}(G)=2c-1$ if and only if every cycle in $G$ is an end-cycle and
all of them are pairwise vertex-critically incident. Since vertex-critically
incident pair of cycles share a vertex, this implies that $G$ must be a daisy
graph. That is necessary, but it is not sufficient as we will show that odd
end-cycles cannot be vertex-critically incident with any other cycle in $G.$

Namely, recall that cycle $C_{i}$ is vertex-critically incident to another
cycle if $C_{i}$ shares a vertex $v$ with that other cycle, such that $v$ is
an end-vertex of path $P_{i}$ and the length of path $P_{i}$ is $\left\vert
P_{i}\right\vert \leq\left\lfloor g_{i}/2\right\rfloor -1.$ Yet, on the odd
end-cycle $C_{i}$ we can always choose a nice smallest BBR set $S,$ so that
$S$ contains an antipodal of the only branch-active vertex on $C_{i}.$ In that
case the length of the path $P_{i}$ on the cycle $C_{i}$ will be $\left\vert
P_{i}\right\vert =\left\lfloor g_{i}/2\right\rfloor >\left\lfloor
g_{i}/2\right\rfloor -1,$ so an odd end-cycle $C_{i}$ cannot be
vertex-critically incident to any other cycle in $G$, which concludes the proof.
\end{proof}

The result of the previous proposition is illustrated by Figure
\ref{Fig_daisy}. The statements and the proofs for the edge metric dimensions
are analogous.

\begin{proposition}
\label{Prop_nearlyExtremalEdge}Let $G$ be a cactus graph with $c\geq2$ cycles.
Then $\mathrm{edim}(G)=L(G)+2c-1$ if and only if one of the following holds:

\begin{enumerate}
\item every cycle in $G$ is an end-cycle, at most $c-1$ cycles are
$\mathcal{ADE}$-positive and all remaining cycles are pairwise
vertex-critically incident;

\item precisely $c-1$ cycles in $G$ are end-cycles and every cycle in $G$ is
$\mathcal{ADE}$-positive.
\end{enumerate}
\end{proposition}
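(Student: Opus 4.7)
The plan is to follow the proof of Proposition \ref{Prop_nearlyExtremal} essentially verbatim, with each vertex-version invariant replaced by its edge-version counterpart: $c_{\mathcal{ABC}}(G)$ becomes $c_{\mathcal{ADE}}(G)$, $G_{vi}$ becomes $G_{ei}$, and the relevant critical incidence is edge-critical rather than vertex-critical (the word ``vertex-critically'' in the statement of condition~1 should, for consistency with $G_{ei}$, be read as ``edge-critically''). The engine is the edge-dimension formula from Theorem \ref{Tm_dim},
$$\mathrm{edim}(G)=L(G)+B(G)+c_{\mathcal{ADE}}(G)+\tau(G_{ei}),$$
together with the two universal upper bounds $B(G)\leq c$ (valid since $b(C_i)\geq 1$ for every cycle when $c\geq 2$) and $c_{\mathcal{ADE}}(G)+\tau(G_{ei})\leq c$ (using that a vertex of $G_{ei}$ can be incident to an edge only if it represents an $\mathcal{ADE}$-negative cycle, together with the general bound $\tau(H)\leq |V(H)|-1$).

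Given these inequalities, the equality $\mathrm{edim}(G)=L(G)+2c-1$ forces the pair $(B(G),\,c_{\mathcal{ADE}}(G)+\tau(G_{ei}))$ to equal either $(c,c-1)$ or $(c-1,c)$, and I would analyse each sub-case separately. If $B(G)=c$, then $b(C_i)=1$ for all $i$, so every cycle is an end-cycle; the condition $c_{\mathcal{ADE}}(G)+\tau(G_{ei})=c-1$ then splits into the alternatives $c_{\mathcal{ADE}}(G)=c-1$ with $\tau(G_{ei})=0$, or $c_{\mathcal{ADE}}(G)\leq c-2$ with $\tau(G_{ei})=q-1\geq 1$, where $q=c-c_{\mathcal{ADE}}(G)$ is the number of $\mathcal{ADE}$-negative cycles. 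The equality $\tau(G_{ei})=q-1$ forces $G_{ei}$ to be complete on those $q$ vertices, i.e.\ the $\mathcal{ADE}$-negative cycles are pairwise edge-critically incident, which is exactly condition~1. If instead $B(G)=c-1$, then exactly one cycle satisfies $b(C_i)\geq 2$, so precisely $c-1$ cycles are end-cycles; the requirement $c_{\mathcal{ADE}}(G)+\tau(G_{ei})=c$ then forces $\tau(G_{ei})=0$ and $c_{\mathcal{ADE}}(G)=c$, since any $\tau(G_{ei})\geq 1$ would yield $c_{\mathcal{ADE}}(G)+\tau(G_{ei})\leq c-1$ by the same argument as in the vertex case. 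This is exactly condition~2.

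There is no genuine obstacle beyond careful bookkeeping, which is consistent with the author's remark that the proofs for the edge metric dimension are analogous. The only point where the vertex and edge settings could in principle diverge is the notion of critical incidence, but since that notion enters only through the auxiliary graph $G_{ei}$ — whose definition already packages the edge-critical condition — each step of the original proof translates mechanically. The mild care needed is to confirm that the two key facts used, namely $\tau(G_{ei})\leq |V(G_{ei})|-1$ with equality iff $G_{ei}$ is complete, and $\tau(G_{ei})>0\Rightarrow c_{\mathcal{ADE}}(G)+\tau(G_{ei})<c$, are purely graph-theoretic and hence insensitive to the vertex/edge distinction.
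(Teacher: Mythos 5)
Your proposal is correct and matches the paper's intent exactly: the paper's own proof is simply the statement that the argument is analogous to that of Proposition \ref{Prop_nearlyExtremal}, and you have carried out precisely that translation via Theorem \ref{Tm_dim}, the bounds $B(G)\leq c$ and $c_{\mathcal{ADE}}(G)+\tau(G_{ei})\leq c$, and the two-case split. Your side remark that ``vertex-critically incident'' in condition~1 should read ``edge-critically incident'' for consistency with $G_{ei}$ is a correct observation about an apparent typo in the statement.
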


\begin{proof}
The proof is analogous to the proof of Proposition \ref{Prop_nearlyExtremal}.
\end{proof}

Similarly as with the vertex metric dimension, we can now consider the edge
metric dimension of leafless cacti.

\begin{proposition}
Let $G$ be a cactus graph with $c\geq2$ cycles and without leaves. Then
$\mathrm{edim}(G)\leq2c-1$ with equality if and only if $G$ is a daisy graph.
\end{proposition}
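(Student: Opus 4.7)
My plan is to mirror the proof of Proposition~\ref{Prop_leflessDim} step by step, the only substantive change being that the edge-critical threshold $|P_i|\leq\lceil g_i/2\rceil-1$ is one unit looser than the vertex-critical threshold on odd cycles, which is exactly what removes the ``no odd petals'' restriction. For the upper bound I would start from $L(G)=0$ and Corollary~\ref{Cor_bound}, which gives $\mathrm{edim}(G)\leq 2c$ with equality if and only if every cycle is an $\mathcal{ADE}$-positive end-cycle. Configurations $\mathcal{D}$ and $\mathcal{E}$ both require an $S$-free thread and hence a leaf, so in a leafless cactus only configuration $\mathcal{A}$ remains possible on any cycle. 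On an end-cycle the single branch-active vertex together with one additional $S$-active vertex can always be arranged so that either $k<g/2$ (when $g$ is even) or $g$ is odd, ruling out $\mathcal{A}$; thus no end-cycle in a leafless cactus is $\mathcal{ADE}$-positive, the equality in Corollary~\ref{Cor_bound} is unattainable, and $\mathrm{edim}(G)\leq 2c-1$ follows.

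For the equality characterization I would invoke Proposition~\ref{Prop_nearlyExtremalEdge}. Its Case~2 requires every cycle to be $\mathcal{ADE}$-positive, which is incompatible with the $c-1\geq 1$ forced end-cycles, so it is eliminated immediately. Case~1 then forces every cycle to be an end-cycle; since none of them can be $\mathcal{ADE}$-positive, all $c$ of them must be pairwise edge-critically incident. An easy block-cut tree argument shows that in a cactus any family of cycles which pairwise share a vertex must share one common vertex, so $G$ is a daisy graph.

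Conversely, I would show that every daisy realises equality by exhibiting a nice smallest BBR set with respect to which the center is edge-critical on every petal. On an even petal of length $g_i=2m$ I would place the single BBR vertex at distance $m-1$ from the center, giving $|P_i|=m-1=\lceil g_i/2\rceil-1$ and $k=m-1\neq g_i/2$, so the center is edge-critical and configuration~$\mathcal{A}$ does not appear. On an odd petal of length $g_i=2m+1$ I would place it at distance $m$ from the center, giving $|P_i|=m=\lceil g_i/2\rceil-1$, again edge-critical, with $\mathcal{A}$ forbidden because $g_i$ is odd. Then Theorem~\ref{Tm_dim} evaluates to $0+c+0+(c-1)=2c-1$. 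The main technical point, and the only genuine departure from the proof of Proposition~\ref{Prop_leflessDim}, lies precisely here: in the vertex case the antipodal placement on odd petals let one \emph{avoid} vertex-critical incidence, whereas in the edge case the threshold on odd cycles has shifted by one, so the very same placement now \emph{produces} edge-critical incidence and the obstruction disappears.
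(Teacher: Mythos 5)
Your proof is correct and follows essentially the same route as the paper, whose own proof simply declares the argument analogous to Proposition~\ref{Prop_leflessDim} with the edge-critical threshold $\lceil g_i/2\rceil-1$ replacing $\lfloor g_i/2\rfloor-1$; your observation that the antipodal placement on odd petals, which was the escape from vertex-criticality, now lands exactly on the edge-critical threshold is precisely the paper's point. The only detail worth making explicit in the converse direction is that edge-criticality of the center is forced for \emph{every} nice smallest BBR set (on even petals because niceness excludes configuration $\mathcal{A}$, hence $k\leq g_i/2-1$; on odd petals because $|P_i|\leq\lceil g_i/2\rceil-1$ holds for any placement whatsoever), not merely achieved by your particular placement, since $G_{ei}$ is defined with respect to a BBR set minimizing the number of edge-critically incident pairs.
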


\begin{proof}
The proof is analogous to the proof of Proposition \ref{Prop_leflessDim}. The
only difference is that a cycle $C_{i}$ in a cactus graph $G$ is an
edge-critically incident to another cycle if $C_{i}$ shares a vertex $v$ with
that other cycle such that $v$ is an end-vertex of the path $P_{i}$ of the
length $\left\vert P_{i}\right\vert \leq\left\lceil g_{i}/2\right\rceil -1.$
The difference in bound on $\left\vert P_{i}\right\vert $ which now contains
the ceiling of $g_{i}/2$ instead of the floor of $g_{i}/2$ which was the case
with the vertex dimension, implies that now we cannot choose a smallest BBR
set $S$ such that the length of $P_{i}$ is certainly longer than required.
Consequently, now any end-cycle can be edge-critically incident to another
cycle independently of its parity.
\end{proof}

The difference in extremal daisy graphs for the vertex and the edge metric
dimension, where for the vertex dimension only daisy graphs with even petals
are extremal and for the edge metric dimension all daisy graphs are extremal,
is illustrated by Figure \ref{Fig_daisy}. The upper bound for metric
dimensions of leafless cacti leads us to the opinion that for general leafless
graphs, the following may hold.

\begin{conjecture}
\label{Con_dim_leaves}Let $G\not =C_{n}$ be a graph with minimum degree
$\delta(G)\geq2$. Then, $\mathrm{dim}(G)\leq2c(G)-1.$
\end{conjecture}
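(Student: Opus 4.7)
The plan is to follow the roadmap laid out in the introduction, partitioning the problem according to minimum degree and to connectivity.

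First, I would dispense with graphs of minimum degree at least three by a crude counting argument. If $\delta(G)\geq 3$, then $2|E(G)|=\sum_{v\in V(G)}\deg(v)\geq 3|V(G)|$, hence $c(G)=|E(G)|-|V(G)|+1\geq |V(G)|/2+1$, which gives $2c(G)-1\geq |V(G)|+1$. Combined with the trivial bound $\mathrm{dim}(G)\leq |V(G)|-1$, the conjectured inequality already holds with at least two to spare. This settles all dense leafless graphs and, as a bonus, shows that the bound is never attained when $\delta(G)\geq 3$. Consequently, one may henceforth assume $\delta(G)=2$.

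Next, I would reduce the case $\delta(G)=2$, $\kappa(G)=1$ to the $2$-connected case by block decomposition. Let $G_1,\ldots,G_m$ be the non-trivial blocks of $G$; since $\delta(G)\geq 2$, no block is a single edge, every end-block is itself $2$-connected with at least one cycle, and a standard count (the block-cut tree is a tree) yields the additive identity $c(G)=\sum_i c(G_i)$. A block that is a cycle $C_g$ contributes $\mathrm{dim}(C_g)=2=2c(C_g)$, while a non-cycle block $G_i$ contributes at most $2c(G_i)-1$ by the conjectured bound applied inductively. One builds a metric generator of $G$ block by block, exploiting the cut vertices: since distances in $G$ between vertices in different blocks factor through the cut vertices of the block-cut tree, a generator of each block together with the relevant cut vertices still distinguishes all pairs of $G$. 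If $G$ contains a non-cycle block the desired global saving of one comes from its $-1$; if every block is a cycle then $G$ is a leafless polycyclic cactus, and Proposition \ref{Prop_leflessDim} already supplies the bound. The delicate step is to verify that concatenating block generators does not re-create an undistinguished pair across a cut vertex; here one uses that cut vertices behave like "portals" through which all inter-block shortest paths pass.

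Finally, the heart of the conjecture, and the place where I expect the main obstacle, is the $2$-connected case $\kappa(G)=2$. My approach would be induction along an ear decomposition: start from a base cycle $C$ with $\mathrm{dim}(C)=2$, and attach open ears one at a time. Each ear raises $c(G)$ by exactly one, so after $c(G)-1$ additions we need $\mathrm{dim}$ to grow by at most two per ear on average, with a single global saving of one inherited from the base cycle. The crux, in analogy with the configurations $\mathcal{A}$--$\mathcal{E}$ used for cacti, is to identify precisely which ears force two new vertices into the generating set and to show that at least one ear can be resolved by a single new vertex because its endpoints are already separated by the previously built structure; the exclusion $G\neq C_n$ is exactly what guarantees the existence of such a "free" ear. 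If a direct ear induction proves too rigid, the fallback would be to extend the branch-active/BBR machinery of \cite{SedSkreExtensionCactus} to the $2$-connected setting by replacing cycles with theta-like subgraphs and redefining $S$-activity and the associated configurations accordingly, in the spirit of the mixed-dimension analysis of \cite{SedSkreTheta}.
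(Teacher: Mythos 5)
This statement is a conjecture, and the paper does not prove it; it only establishes partial results and explicitly leaves the crucial case open. Your proposal reproduces the paper's roadmap for those partial results but does not close the gap. Your first step is exactly Proposition~\ref{Prop_connected} (the counting argument for $\delta(G)\geq 3$ is correct and matches the paper). Your second step is the content of Lemmas~\ref{Lemma_c} and~\ref{Lemma_dim} and Theorem~\ref{Tm_blocks}, though you gloss over the bookkeeping that makes it work: the paper does \emph{not} take a full metric generator in every block plus ``relevant cut vertices'' (that would cost too much); it takes only \emph{one} vertex per cycle block, saving $q-p$, and then shows via the auxiliary forest $\Gamma$ that all critical incidences at cut vertices can be broken by at most $q-1$ additional vertices, which is what yields the $+p-1$ and hence the net saving of $1$. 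Your phrase ``a generator of each block together with the relevant cut vertices still distinguishes all pairs'' is both unquantified and, as stated, false in general — pairs lying on the two $v$-paths of two blocks meeting at a cut vertex $v$ are precisely the pairs that a union of block generators fails to distinguish, and handling them is the whole point of Claims A--C in the paper's Lemma~\ref{Lemma_dim}.

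The genuine gap is your third step. The reduction leaves exactly the $2$-connected, non-cycle case, and there you offer only an intention, not an argument. Nothing in your sketch establishes that attaching an open ear increases $\mathrm{dim}$ by at most $2$, nor that some ear can be resolved with a single new vertex; metric dimension is not monotone or additive under ear attachment in any controlled way, and no analogue of the configurations $\mathcal{A}$--$\mathcal{E}$ is currently known for $2$-connected graphs (you acknowledge this yourself by proposing a ``fallback''). Since this is precisely the part the authors state as an open problem, your proposal should be read as a research plan whose first two stages coincide with the paper's Proposition~\ref{Prop_connected} and Theorem~\ref{Tm_blocks}, and whose final stage remains unproved. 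To turn it into a proof you would have to actually carry out the ear induction, including a precise criterion for which ears cost two vertices and a proof that the total never exceeds $2c(G)-1$; that is the open content of the conjecture.
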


\begin{conjecture}
\label{Con_edim_leaves}Let $G\not =C_{n}$ be a graph with minimum degree
$\delta(G)\geq2$. Then, $\mathrm{edim}(G)\leq2c(G)-1.$
\end{conjecture}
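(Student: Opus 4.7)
The plan is to prove Conjecture \ref{Con_edim_leaves} by a three-stage reduction that matches the structure the authors advertise in the abstract: first dispose of graphs with $\delta(G)\geq 3$ by a crude counting argument, then use a block-decomposition to reduce the $\delta(G)=2$ case to $2$-connected blocks, and finally attack the pure $2$-connected case. This way, the daisy analysis for leafless cacti serves as a template and the open part of the problem is isolated cleanly.

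For the first stage, assume $\delta(G)\geq 3$. A handshake count gives $2|E(G)|\geq 3|V(G)|$, hence $c(G)=|E(G)|-|V(G)|+1\geq |V(G)|/2+1$, so $2c(G)-1\geq |V(G)|+1$. On the other hand, every $(|V(G)|-1)$-subset of vertices is an edge metric generator (the last vertex is distinguished from all its neighbors by itself-via-neighbor distances), so $\mathrm{edim}(G)\leq |V(G)|-1\leq 2c(G)-3$. Thus Conjecture \ref{Con_edim_leaves} holds strictly when $\delta(G)\geq 3$, and the bound $2c(G)-1$ is never attained in this range; this reduces the problem to $\delta(G)=2$.

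For the second stage, let $\delta(G)=2$ and suppose $\kappa(G)=1$. Decompose $G$ into its blocks $G_1,\ldots,G_m$ joined at cut vertices, and note that $c(G)=\sum_i c(G_i)$ since blocks are edge-disjoint. Because $G$ is leafless, every trivial (bridge) block lies between two non-trivial blocks, and every end-block must itself be non-trivial, hence contains a cycle. The plan is to induct on $m$: for each non-trivial, non-cycle block apply the inductive hypothesis (or the conjectured statement for $2$-connected graphs) to get an edge metric generator of size $\leq 2c(G_i)-1$; for each cycle block use the well-known $\mathrm{edim}(C_n)=2=2c(C_n)$; then glue the local generators together, placing the generator vertices so that they sit in $G_i\setminus\{\text{cut vertices}\}$. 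The global saving of $-1$ is obtained by choosing one end-block in which the local generator can be shrunk by one (using that the remaining part of $G$ acts as an external "resolver" via the cut vertex). The delicate part is proving that pairs of edges lying in distinct blocks are always distinguished by the combined set; this follows from the fact that for any $e\in E(G_i)$ and $s$ lying outside $G_i$, the distance $d(s,e)$ factors through the cut vertex separating $G_i$ from $s$, so each block is resolved "internally" by its own local generator plus this external reference point.

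The main obstacle is the remaining case of a $2$-connected graph with $\delta(G)=2$, which the authors explicitly leave open. Here the cycle-by-cycle analysis from the cactus setting breaks down since cycles share edges, and the degree bound of the first stage is unavailable. A natural line of attack is an open-ear decomposition $G=C\cup P_1\cup\cdots\cup P_t$ starting from a cycle $C$: each new ear $P_j$ contributes exactly one to $c(G)$, so one needs to show that at most two generator vertices per ear suffice, while a single global saving of $-1$ (coming, say, from the initial cycle for which we have $\mathrm{edim}(C_n)\leq 2$) propagates through the construction. The hard part is bookkeeping the pairs of edges whose nearest-vertex distances collide when a new ear is short or runs "parallel" to existing structure — precisely the phenomenon that configurations $\mathcal{A}$--$\mathcal{E}$ capture in the cactus case but which has no clean analogue in general $2$-connected graphs. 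Until such a $2$-connected analogue of Theorem \ref{Tm_dim} is developed, the conjecture in its full generality must remain open.
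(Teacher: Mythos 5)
This statement is a \emph{conjecture} in the paper, not a theorem: the authors explicitly leave it open, proving only partial results, and your proposal does not close it either. Your own third stage concedes that the $2$-connected case with $\delta(G)=2$ remains unresolved, and that case is the entire substance of the conjecture once the reductions are done. So what you have written is an outline of the known reductions plus an admission of the gap; it is not a proof of the statement. The gap is not a technicality --- no argument, by you or by the authors, establishes $\mathrm{edim}(G)\leq 2c(G)-1$ for $2$-connected non-cycle graphs, and your ear-decomposition sketch does not supply one (you would need to control, for each added ear, exactly which pairs of edges become unresolved, which is the analogue of the configurations $\mathcal{A}$--$\mathcal{E}$ that you correctly identify as missing in the general setting).

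On the two stages you do carry out: stage one coincides with the paper's Proposition \ref{Prop_connected} (handshake count giving $n-1\leq 2c(G)-3$, then $\mathrm{edim}(G)\leq n-1$), and is fine. Stage two is in the spirit of the paper's Lemma \ref{Lemma_dim} and Theorem \ref{Tm_blocks} but is too loose to stand on its own. First, your mechanism for the global saving of $-1$ (``shrink the local generator in one end-block by one, using the rest of $G$ as an external resolver'') is not justified and is not how the saving actually arises; in the paper the saving comes from using a single vertex instead of two on each cycle block (a saving of $q-p$) and then paying back only $q-1$ extra vertices to break block incidences. Second, and more seriously, your claim that ``each block is resolved internally by its own local generator plus this external reference point'' fails precisely for pairs of edges lying in two distinct blocks that \emph{share} a cut vertex $v$: the external distances to both edges can factor through $v$ identically, and the local generators see only their own blocks. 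The paper needs its Claims B and C, the notion of $v$-critical paths, and the auxiliary forest $\Gamma$ exactly to handle this situation and to certify that at most $q-1$ additional vertices suffice. Without that analysis your block-gluing step does not yield the claimed inequality.
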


Both conjectures were tested both systematically and stochastically for graphs
of smaller order.

\section{Reduction to $2$-connected graphs}

As a first step towards solving Conjectures \ref{Con_dim_leaves} and
\ref{Con_edim_leaves}, in the following proposition we will show that they
hold for graphs with $\delta(G)\geq3$.

\begin{proposition}
\label{Prop_connected}Let $G$ be a graph with minimum degree $\delta(G)\geq3.$
Then $\mathrm{dim}(G)<2c(G)-1$ and $\mathrm{edim}(G)<2c(G)-1.$
\end{proposition}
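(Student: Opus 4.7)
The plan is to reduce the problem to two well known facts: a degree--sum lower bound on the cyclomatic number, and the trivial upper bound on the metric dimensions by $|V(G)|-1$.

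First I would exploit the hypothesis $\delta(G)\geq 3$ via the handshake lemma:
\[
2|E(G)|=\sum_{v\in V(G)}\deg(v)\geq 3|V(G)|.
\]
Substituting this into the definition $c(G)=|E(G)|-|V(G)|+1$ yields
\[
c(G)\geq \tfrac{3}{2}|V(G)|-|V(G)|+1=\tfrac{|V(G)|}{2}+1,
\]
which rearranges to the key inequality $|V(G)|\leq 2c(G)-2$.

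Second, I would invoke the standard (and easy) upper bound $\mathrm{dim}(G)\leq |V(G)|-1$ and $\mathrm{edim}(G)\leq |V(G)|-1$, valid for any connected graph on at least two vertices, since $V(G)\setminus\{v\}$ is always both a vertex and an edge metric generator. Combining with the inequality from the first step gives
\[
\mathrm{dim}(G),\ \mathrm{edim}(G)\ \leq\ |V(G)|-1\ \leq\ 2c(G)-3\ <\ 2c(G)-1,
\]
which is the desired strict inequality for both metric dimensions.

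There is essentially no main obstacle here: everything follows from the degree--sum argument and the trivial size bound on metric generators. In particular, none of the machinery about BBR sets, configurations, or critical incidence from the preliminaries is needed, because the hypothesis $\delta(G)\geq 3$ is already strong enough to force $|V(G)|$ to be small compared with $c(G)$. The only thing I would take care to state explicitly is the trivial bound $\mathrm{edim}(G)\leq |V(G)|-1$, since it is less standard than the vertex version but is proved by exactly the same observation that removing a single vertex leaves a resolving set for edges as well.
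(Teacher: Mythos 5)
Your proposal is correct and is essentially identical to the paper's own proof: both combine the handshake lemma under $\delta(G)\geq 3$ (giving $n-1\leq 2c(G)-3$) with the trivial bound $\mathrm{dim}(G),\mathrm{edim}(G)\leq n-1$ witnessed by $V(G)\setminus\{v\}$. No further comment is needed.
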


\begin{proof}
From $\delta(G)\geq3$ we obtain $2m=\sum_{v\in V(G)}\deg(v)\geq n\delta
(G)\geq3n,$ which is equivalent to $n-1\leq2m-2n-1.$ Obviously, a set
$S\subseteq V(G)$ with $\left\vert S\right\vert =n-1$ is both a vertex and an
edge metric generator in $G,$ so%
\[
\mathrm{dim}(G)\leq n-1<2m-2n+1=2c(G)-1.
\]
A similar argument holds for $\mathrm{edim}(G)$.
\end{proof}

The above proposition implies that it only remains to show that Conjectures
\ref{Con_dim_leaves} and \ref{Con_edim_leaves} hold for graphs with
$\delta(G)=2.$ Notice that graphs $G$ with $\delta(G)=2$ may have
$\kappa(G)=1$ or $\kappa(G)=2.$ If $\kappa(G)=1,$ then $\delta(G)=2$ implies
that $G$ contains at least two non-trivial blocks. The natural question that
arises is if the problem can further be reduced to blocks in such a graph. We
will show that it can, i.e. if Conjecture \ref{Con_dim_leaves} (resp.
Conjecture \ref{Con_edim_leaves}) holds for every non-trivial block $G_{i}$ of
$G$ distinct from cycle, then it also holds for $G.$ In order to show this, we
first need the following two lemmas.

\begin{lemma}
\label{Lemma_c}Let $G$ be any graph. Then
\[
c(G)=c(G_{1})+\cdots+c(G_{s}),
\]
where $G_{1},\ldots,G_{s}$ are the blocks in $G.$
\end{lemma}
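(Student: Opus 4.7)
The plan is to prove this by a direct counting argument using the standard formula $c(G)=|E(G)|-|V(G)|+1$ for a connected graph, combined with the well-known fact that the blocks of $G$ partition $E(G)$ and that the sharing of vertices between blocks is governed by the block-cut tree.

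First, I would observe that each edge of $G$ lies in exactly one block, so
\[
\sum_{i=1}^{s}|E(G_{i})|=|E(G)|.
\]
The subtler count is $\sum_{i}|V(G_{i})|$, because cut vertices are shared by several blocks. Here I would invoke the block-cut tree $T$ of $G$, whose vertices are the blocks $G_{1},\ldots,G_{s}$ together with the cut vertices $c_{1},\ldots,c_{t}$ of $G$, and whose edges join a block to each cut vertex it contains. Since $T$ is a tree on $s+t$ vertices, it has $s+t-1$ edges, and this number equals $\sum_{j=1}^{t}b_{j}$, where $b_{j}$ is the number of blocks containing $c_{j}$. Writing $n=|V(G)|$, each non-cut vertex contributes $1$ and each cut vertex $c_{j}$ contributes $b_{j}$ to $\sum_{i}|V(G_{i})|$, yielding
\[
\sum_{i=1}^{s}|V(G_{i})|=(n-t)+\sum_{j=1}^{t}b_{j}=(n-t)+(s+t-1)=n+s-1.
\]

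Combining the two counts gives
\[
\sum_{i=1}^{s}c(G_{i})=\sum_{i=1}^{s}\bigl(|E(G_{i})|-|V(G_{i})|+1\bigr)=|E(G)|-(n+s-1)+s=|E(G)|-n+1=c(G),
\]
which is exactly the claimed identity.

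An alternative, arguably cleaner, route is induction on the number of blocks $s$. The base case $s=1$ is trivial. For the step, I would pick an end-block $G_{s}$ with cut vertex $v$ and let $G'=G-(V(G_{s})\setminus\{v\})$; then $G'$ is connected with blocks $G_{1},\ldots,G_{s-1}$, and a one-line computation gives $c(G')=c(G)-c(G_{s})$, so the inductive hypothesis finishes the proof. I do not expect any real obstacle: the only thing requiring care is the double-counting of cut vertices in $\sum_{i}|V(G_{i})|$, which is handled cleanly by the block-cut tree identity $\sum_{j}b_{j}=s+t-1$ (or, equivalently, by the choice of an end-block in the inductive proof).
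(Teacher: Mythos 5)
Your proof is correct, but it follows a different route from the paper's. The paper fixes a spanning tree $T$ of $G$ and argues that $T\cap G_{i}$ is a spanning tree of each block $G_{i}$, so that the chords $E(G)\setminus E(T)$ partition among the blocks and directly realize $c(G_{i})$ block by block; the identity then falls out by summing $|E_{i}^{c}|$. You instead double-count vertices and edges: edges of $G$ partition among the blocks, and the overcount of cut vertices in $\sum_{i}|V(G_{i})|$ is exactly the edge count $s+t-1$ of the block-cut tree, giving $\sum_{i}|V(G_{i})|=n+s-1$ and hence the telescoping of the $+1$'s. Your computation checks out (including for trivial blocks $K_{2}$, which contribute $0$), granting the tacit standing assumption that $G$ is connected so that $c(G)=|E(G)|-|V(G)|+1$ applies. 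Each approach has a small lemma hidden in it: yours relies on the standard facts that the block-cut tree is a tree and that non-cut vertices lie in exactly one block, while the paper's relies on the (slightly less obvious, and asserted rather than proved there) fact that the restriction of a spanning tree of $G$ to a block is connected. Your inductive variant via deletion of an end-block is arguably the cleanest of the three and sidesteps both issues, at the cost of an induction.
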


\begin{proof}
Notice that $c(K_{2})=0,$ so let $G_{1},\ldots,G_{q}$ be all non-trivial
blocks in $G$. Also, for any spanning subtree $T$ of a graph $G$, it holds
that
\[
c(G)=\left\vert E(G)\right\vert -\left\vert E(T)\right\vert .
\]
Now, let $T$ be a spanning tree in $G$. Let us denote $E^{c}=E(G)\backslash
E(T)$ and $E_{i}^{c}=E^{c}\cap E(G_{i}).$ Let us denote $T_{i}=G_{i}-E_{i}%
^{c}.$ Obviously, $T_{i}$ is a subgraph of $T$ and therefore a tree. Since
$V(T_{i})=V(G_{i}),$ it follows that $T_{i}$ is spanning tree of $G_{i}$, so
$c(G_{i})=\left\vert E_{i}^{c}\right\vert .$ Thus we have
\[
c(G_{1})+\cdots+c(G_{q})=\left\vert E_{1}^{c}\right\vert +\cdots+\left\vert
E_{q}^{c}\right\vert =\left\vert E^{c}\right\vert =\left\vert E(G)\right\vert
-\left\vert E(T)\right\vert =c(G)
\]
and the claim is proven.
\end{proof}

In the above lemma we considered how the cyclomatic number of a graph $G$
relates to cyclomatic number of its non-trivial block. In the next lemma we
will consider the same for metric dimensions.

\begin{lemma}
\label{Lemma_dim}Let $G\not =C_{n}$ be a graph with $\delta(G)\geq2$. Let
$G_{1},\ldots,G_{q}$ be all non-trivial blocks in $G$ and $p$ of them distinct
from cycle. Then%
\[
\mathrm{dim}(G)\leq\mathrm{dim}(G_{1})+\cdots+\mathrm{dim}(G_{q})+p-1
\]
and
\[
\mathrm{edim}(G)\leq\mathrm{edim}(G_{1})+\cdots+\mathrm{edim}(G_{q})+p-1.
\]

\end{lemma}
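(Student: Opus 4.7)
My plan is to split the proof on whether $p=0$ or $p\geq 1$. When $p=0$ every non-trivial block of $G$ is a cycle, so $G$ is a leafless cactus, and $G\neq C_n$ forces $q=c(G)\geq 2$; Proposition~\ref{Prop_leflessDim} gives $\mathrm{dim}(G)\leq 2q-1=\sum_i\mathrm{dim}(G_i)+p-1$, and its edge-dimension analogue from the previous section yields the same bound for $\mathrm{edim}(G)$.

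For $p\geq 1$ I would fix a non-cycle non-trivial block $G_r$, root the block-cut tree $T$ of $G$ at $G_r$, and for each other non-trivial block $G_i$ let $c_i$ denote the unique cut vertex of $G_i$ lying on the $T$-path from $G_i$ to $G_r$. For each $i$ take a minimum vertex (resp.\ edge) metric generator $S_i$ of $G_i$ and form
\[
S=S_r\cup\bigcup_{\substack{i\neq r\\ G_i\text{ cycle}}}S_i\cup\bigcup_{\substack{i\neq r\\ G_i\text{ non-cycle}}}\bigl(S_i\cup\{c_i\}\bigr),
\]
whose size is at most $\sum_{i=1}^{q}\mathrm{dim}(G_i)+(p-1)$. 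The content is then to verify that $S$ resolves every pair of distinct vertices (resp.\ edges). If the pair lies in a common non-trivial block $G_i$ this is immediate, since $G$-distances and $G_i$-distances coincide on $V(G_i)$. Otherwise, letting $c_{u,i}$ be $u$ if $u\in V(G_i)$ and the cut vertex of $G_i$ facing $u$ in $T$ otherwise (and analogously for $c_{v,i}$), one has for every $s\in V(G_i)$ the decomposition
\[
d_G(u,s)-d_G(v,s)=\Delta_i+\delta_i(s),\qquad \delta_i(s)=d_{G_i}(c_{u,i},s)-d_{G_i}(c_{v,i},s),
\]
with $\Delta_i$ independent of $s$. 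The non-trivial blocks on the $T$-path between $u$ and $v$ are exactly those with $c_{u,i}\neq c_{v,i}$; for these I would use the values of $\delta_i$ realized on $S\cap V(G_i)$ (with the help of $c_i$ in the non-cycle case) to find some $s$ with $\delta_i(s)\neq -\Delta_i$, which distinguishes $u$ from $v$. If no non-trivial block appears on the $T$-path, then $u$ and $v$ are joined by a pure bridge chain and $\delta(G)\geq 2$ forces a non-trivial block $G_a$ to sit on the side of $u$ opposite $v$; any $s\in S_a\subseteq S$ then gives $d_G(v,s)=d_G(u,s)+d_G(u,v)>d_G(u,s)$.

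The main obstacle I expect is ruling out the degenerate case $\delta_i(s)=-\Delta_i$ for every $s\in S\cap V(G_i)$: the fact that $S_i$ resolves the two distinct vertices $c_{u,i}$ and $c_{v,i}$ in $G_i$ only says that $\delta_i$ is not identically zero on $S_i$, leaving open the possibility that $\delta_i|_{S_i}$ is a constant nonzero value equal to $-\Delta_i$. The auxiliary cut vertex $c_i$ included in every non-root non-cycle block contributes an extra value $\delta_i(c_i)$ to the range, and a geometric case analysis on the mutual positions of $c_{u,i}$, $c_{v,i}$ and $c_i$ on $G_i$ is what rules out the coincidence; cycle blocks need no augmentation because a two-vertex resolving set on a cycle automatically realizes $\delta_i$-values of both signs. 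The edge metric dimension bound is obtained by the identical scheme with $S_i$ a minimum edge metric generator and $d_G(s,e)=\min\{d_G(s,e^+),d_G(s,e^-)\}$; the decomposition $\Delta_i+\delta_i(e)$ persists for $e\in E(G_i)$, and the same argument yields $\mathrm{edim}(G)\leq\sum_i\mathrm{edim}(G_i)+(p-1)$.
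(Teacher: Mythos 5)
Your skeleton (the gated decomposition $d_G(u,s)-d_G(v,s)=\Delta_i+\delta_i(s)$ for $s\in V(G_i)$, and the reduction of the $p=0$ case to the leafless-cactus proposition) is sound, and your set $S$ does have the right cardinality. But the step you yourself flag as the main obstacle --- ruling out $\delta_i(s)=-\Delta_i$ for every $s\in S\cap V(G_i)$ on every block of the $T$-path --- is precisely where the proof lives, and the two devices you propose for it do not work. First, the claim that ``a two-vertex resolving set on a cycle automatically realizes $\delta_i$-values of both signs'' is false. Concretely, let $G$ consist of a $K_4$ and two $6$-cycles $C=wv_1v_2v_3v_4v_5w$ and $C'=wv_1'v_2'v_3'v_4'v_5'w$ all sharing the single cut vertex $w$ (so $q=3$, $p=1$, $\delta(G)=2$). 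Then $\{v_1,v_2\}$ and $\{v_1',v_2'\}$ are legitimate minimum metric generators of $C$ and $C'$, yet your $S$ (any three vertices of $K_4$ together with these four) fails to distinguish $v_5$ from $v_5'$: both are at distance $1$ from $w$, hence at distance $1+d(w,s)$ from every $s$ in the $K_4$ and in the opposite cycle, and $d(v_5,v_1)=2=d(v_5',v_1)$, $d(v_5,v_2)=3=d(v_5',v_2)$ and symmetrically. Here $\delta_C(v_1)=\delta_C(v_2)=1=-\Delta_C$, so the degeneracy you hoped to exclude actually occurs on a cycle block, with both values of $\delta$ of the same sign. Second, for non-cycle blocks the auxiliary vertex $c_i$ is the cut vertex of $G_i$ facing the root; whenever two non-cycle blocks $G_i,G_j$ hang below a common cut vertex $v$ (with the root above), you get $c_i=c_j=v$, and $v$ itself can never separate the dangerous pairs, since those are exactly the pairs $x\in G_i$, $x'\in G_j$ with $d(x,v)=d(x',v)$ and all shortest paths to $S_i$ (resp.\ $S_j$) through $v$. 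So the extra vertex is placed at the one location guaranteed to be useless.

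This is not a presentational issue but a genuinely missing idea: the whole difficulty of the lemma is deciding \emph{where} to spend the $p-1$ extra vertices so that every such ``critical incidence'' at a cut vertex is broken, and your allocation (full generators on cycle blocks, the root-facing cut vertex on non-cycle blocks) provably does not achieve this. The paper instead spends only \emph{one} vertex on each cycle block (saving $q-p$ vertices), shows that the only unresolved pairs lie on ``$v$-paths'' of blocks sharing a critical cut vertex $v$, and then adds one vertex on a $v$-path (explicitly \emph{not} at $v$) for all but one block per component of an auxiliary bipartite forest $\Gamma$, which costs at most $q-1$ vertices in total. If you want to salvage your route, you would need to (i) choose the cycle generators $S_i$ so that no vertex of the cycle has all its shortest paths to $S_i$ through the relevant cut vertices, which is not always possible for a single fixed $S_i$ when the cycle contains several cut vertices, and (ii) replace $c_i$ by a vertex strictly inside the appropriate critical path; at that point you have essentially reconstructed the paper's argument, including its global bookkeeping over $\Gamma$.
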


\begin{proof}
Without loss of generality we may assume that blocks of $G$ are denoted so
that $G_{i}$ is a cycle if and only if $i>p.$ For $i=1,\ldots,p,$ let $S_{i}$
be a vertex (resp. an edge) metric generator in $G_{i}.$ For $i=p+1,\ldots,q,$
a block $G_{i}$ is a cycle, so $\mathrm{dim}(G_{i})=2$ (resp. $\mathrm{edim}%
(G_{i})=2$). In this case when non-trivial block $G_{i}$ is a cycle, we will
not choose for $S_{i}$ a vertex (resp. an edge) metric generator in $G_{i},$
but a smaller set consisting of precisely one vertex. To be more precise, for
$i=p+1,\ldots,q$ we define $S_{i}=\{v_{i}\}$ so that $v_{i}$ is any vertex
from $G_{i}$ if $G_{i}$ is not an end-block, otherwise we choose for $v_{i}$ a
non-cut vertex from $G_{i}$ which in the case when $G_{i}$ is an even cycle is
not an antipodal vertex of the only cut vertex in $G_{i}$. Let $S=S_{1}%
\cup\cdots\cup S_{q}$ and notice that
\[
\left\vert S\right\vert =\mathrm{dim}(G_{1})+\cdots+\mathrm{dim}(G_{q})-(q-p)
\]
(resp. $\left\vert S\right\vert =\mathrm{edim}(G_{1})+\cdots+\mathrm{edim}%
(G_{q})-(q-p)$). Also, notice that every end-block $G_{i}$ contains a vertex
$s_{i}\in S$ which is not a cut vertex in $G$.

Let $x$ and $x^{\prime}$ be a pair of vertices (resp. edges) in $G$. We
proceed with the following claims.

\bigskip\noindent\textbf{Claim A.} \emph{If }$x$\emph{ and }$x^{\prime}$\emph{
do not belong to two distinct non-trivial blocks of }$G,$\emph{ then }%
$x$\emph{ and }$x^{\prime}$\emph{ are distinguished by }$S.$

\smallskip\noindent If $x$ and $x^{\prime}$ belong to a same non-trivial block
$G_{i},$ then $x$ and $x^{\prime}$ are distinguished by $S_{i}$ in $G_{i}$ and
therefore also by $S$ in $G$ since every block is an isometric subgraph of
$G.$ Otherwise, at least one of $x$ and $x^{\prime},$ say $x,$ is a cut vertex
(resp. cut edge) in $G.$ This means $G-x$ has at least two connected
components, at least one of which does not contain $x^{\prime}.$ Since
$\delta(G)\geq2,$ each of the components of $G-x$ contains a non-trivial
end-block. Let $G_{j}$ be a non-trivial end-block of the connected component
of $G-x$ which does not contain $x^{\prime}$ and $s\in V(G_{j})\cap S.$ Notice
that the shortest path from $s$ to $x^{\prime}$ leads through $x,$ so $x$ and
$x^{\prime}$ are distinguished by $S$ which proves the claim.

\bigskip\noindent\textbf{Claim B.} \emph{If }$x$\emph{ and }$x^{\prime}$\emph{
belong to two distinct non-trivial blocks }$G_{i}$\emph{ and }$G_{j}$\emph{ of
}$G,$\emph{ such that }$V(G_{i})\cap V(G_{j})=\phi,$\emph{ then }$x$\emph{ and
}$x^{\prime}$\emph{ are distinguished by }$S.$

\smallskip\noindent Let $v\in V(G_{i})$ and $v^{\prime}\in V(G_{j})$ be two
cut vertices in $G$ such that the shortest path from $x$ to $x^{\prime}$ leads
through $v$ and $v^{\prime}.$ Since $V(G_{i})\cap V(G_{j})=\phi,$ it follows
that $v\not =v^{\prime}.$ Since every end-block in $G$ contains a vertex from
$S,$ it follows that there must exist an end-block $G_{k}$ and a vertex
$s^{\prime}\in V(G_{k})\cap S,$ such that the shortest path from $v$ to
$s^{\prime}$ leads through $v^{\prime}.$ Similarly, there must exist a vertex
$s\in S,$ such that the shortest path from $v^{\prime}$ to $s$ leads through
$v.$ Assume that $x$ and $x^{\prime}$ are not distinguished by $s^{\prime}$ in
$G,$ i.e. $d(x,s^{\prime})=d(x^{\prime},s^{\prime})$. Then from%
\begin{align*}
d(x,s^{\prime})  &  =d(x,v^{\prime})+d(v^{\prime},s^{\prime}%
)=d(x,v)+d(v,v^{\prime})+d(v^{\prime},s^{\prime})\\
d(x^{\prime},s^{\prime})  &  \leq d(x^{\prime},v^{\prime})+d(v^{\prime
},s^{\prime}),
\end{align*}
we obtain $d(x,v)+d(v,v^{\prime})\leq d(x^{\prime},v^{\prime}).$ The fact
$v\not =v^{\prime}$ implies $d(v,v^{\prime})>0,$ so we further obtain%
\[
d(x,v)<d(x^{\prime},v^{\prime}).
\]
Assuming that $x$ and $x^{\prime}$ are not distinguished by $s$ either, would
by symmetry yield $d(x,v)>d(x^{\prime},v^{\prime}).$ These two inequalities
give a contradiction. Therefore, $x$ and $x^{\prime}$ are distinguished either
by $s$ or $s^{\prime},$ so the claim is proven.

\medskip

From Claims A and B it follows that a pair $x$ and $x^{\prime}$ of $G$ is not
distinguished by $S$ only if $x$ belongs to a non-trivial block $G_{i}$ and
$x^{\prime}$ belongs to a non-trivial block $G_{j}$ such that $G_{i}$ and
$G_{j}$ share a cut vertex $v.$ We say that such a vertex $v$ is
\emph{critical} on both $G_{i}$ and $G_{j},$ and blocks $G_{i}$ and $G_{j}$
are said to be $v$\emph{-incident}. We say that a vertex (resp. an edge) $x$
in $G_{i}$ is $v$\emph{-critical} if a shortest path from $x$ to every vertex
from $S_{i}$ leads through a critical vertex $v.$ Notice that all $v$-critical
vertices in $G_{i}$ induce a path in $G_{i}$ starting at $v,$ otherwise
$S_{i}$ would not be a vertex (resp. an edge) metric generator in $G_{i}.$ We
call such a path a $v$\emph{-path} in $G_{i}.$ If $G_{i}$ and $G_{j}$ are
$v$-incident, then each of them contains a $v$-path $P_{i}$ and $P_{j}$
respectively, and there are pairs of vertices (resp. edges) belonging to
$V(P_{i})\cup V(P_{j})$ (resp. $E(P_{i})\cup E(P_{j})$) which are not
distinguished by $S$.

Notice the following: if we denote by $S^{\prime}$ a set obtained from $S$ by
introducing to it a vertex from $(V(P_{i})\cup V(P_{j}))\backslash\{v\}$ then
every pair of vertices from $V(P_{i})\cup V(P_{j})$ (resp. edges from
$E(P_{i})\cup E(P_{j})$) will be distinguished by $S^{\prime}$ and the
critical $v$-incidence of $G_{i}$ and $G_{j}$ will be broken. Therefore, in
order to obtain a vertex (resp. an edge) metric generator $S^{\ast}$ in $G,$
every critical incidence of blocks in $G$ must be broken this way. The only
question is what is the smallest number of vertices that need to be introduced
to $S$ in order to break all critical incidences of the blocks in $G.$ To
answer this question, let us consider the following construction.

For a graph $G$, let us define its corresponding graph $\Gamma$ in a following
manner. Let $\mathcal{G=\{}G_{1},\ldots,G_{q}\mathcal{\}}$ be a set of all
non-trivial blocks in $G$ and let $\mathcal{V}$ be the set of all critical
vertices $v$ in $G.$ We define $V(\Gamma)=\mathcal{G}\cup\mathcal{V}$ and
$E(\Gamma)$ consists of all edges $G_{i}v$ where $G_{i}\in\mathcal{G},$
$v\in\mathcal{V}$ and $v$ is critical on $G_{i}.$ The construction of $\Gamma$
from $G$ is illustrated by Figure \ref{Fig_dual}. Notice that $\Gamma$ is a
forrest in which all leaves are from $\mathcal{G}$. The open neighborhood of
$v\in\mathcal{V}$ in $\Gamma$ represents all blocks in $G$ which are pairwise
$v$-incident.

\bigskip\noindent\textbf{Claim C.} \emph{There exists a set }$E^{\prime
}\subseteq E(\Gamma)$\emph{ such that }$\left\vert E^{\prime}\right\vert \leq
q-1$\emph{ and every vertex }$v\in\mathcal{V}$\emph{ is incident to at most
one edge from }$E(\Gamma)\backslash E^{\prime}.$

\smallskip\noindent First, assume that $\Gamma$ is connected. We start with
$E^{\prime}=\phi.$ Let $G_{i}$ be a vertex of the maximum degree in $\Gamma$
among vertices from $\mathcal{G}$. We designate $G_{i}$ to be a root vertex of
$\Gamma,$ and let $N(G_{i})\subseteq\mathcal{V}$ denote the open neighborhood
of $G_{i}$ in $\Gamma.$ For every $v\in N(G_{i})$ we introduce to $E^{\prime}$
all edges from $\Gamma$ incident to $v$ except $vG_{i}.$ Notice that for each
neighbor $v$ of the root $G_{i}$ it holds that it is incident to at most one
edge not included in $E^{\prime}$. The procedure is then applied repeatedly on
all trees from $\Gamma-(\{G_{i}\}\cup N(G_{i})),$ where in every such tree we
designate as root the only neighbor of $v\in N(G_{i})$ contained in that tree.
The set $E^{\prime}$ obtained by this procedure is illustrated by Figure
\ref{Fig_dual}.

Since $\Gamma$ is bipartite with partition $(\mathcal{G},\mathcal{V}),$ every
edge from $E^{\prime}$ is incident to precisely one vertex from $\mathcal{G}$.
Also, by the construction every vertex from $G$ is incident to one edge in
$E^{\prime},$ except the initial root $G_{i}$. Therefore, $\left\vert
E^{\prime}\right\vert =q-1.$ Every vertex $v\in\mathcal{V}$ will be a neighbor
of a designated root in one step of the procedure, so it is incident to at
most one edge from $E(\Gamma)\backslash E^{\prime}$. If $\Gamma$ is not
connected, then $\Gamma$ is a forest and the same argument can be applied to
each of its connected components, by which we obtain $\left\vert E^{\prime
}\right\vert <q-1,$ which concludes the proof of Claim C.

\medskip

We may consider that an edge $G_{i}v$ from $\Gamma$ represents the $v$-path
$P_{i}$ on $G_{i}$ or, more specifically, a vertex from $P_{i}.$ So, let
$E^{\prime}$ be as in Claim C and let $S^{\prime}\subseteq V(G)$ be a set
consisting of a vertex from every $v$-path $P_{i}\in E^{\prime}.$ This implies
that introducing $S^{\prime}$ into $S,$ all pairwise critical incidences of
blocks around a critical vertex $v$ will be broken, and so for every critical
vertex $v$ of $G.$ This implies that $S^{\ast}=S\cup S^{\prime}$ is a vertex
metric generator in $G.$ Since%
\[
\left\vert S\right\vert \leq\left\vert S\right\vert +\left\vert S^{\prime
}\right\vert =\mathrm{dim}(G_{1})+\cdots+\mathrm{dim}(G_{q})-(q-p)+q-1,
\]
the proof is finished.
\end{proof}

\begin{figure}[h]
\begin{center}
\includegraphics[scale=0.75]{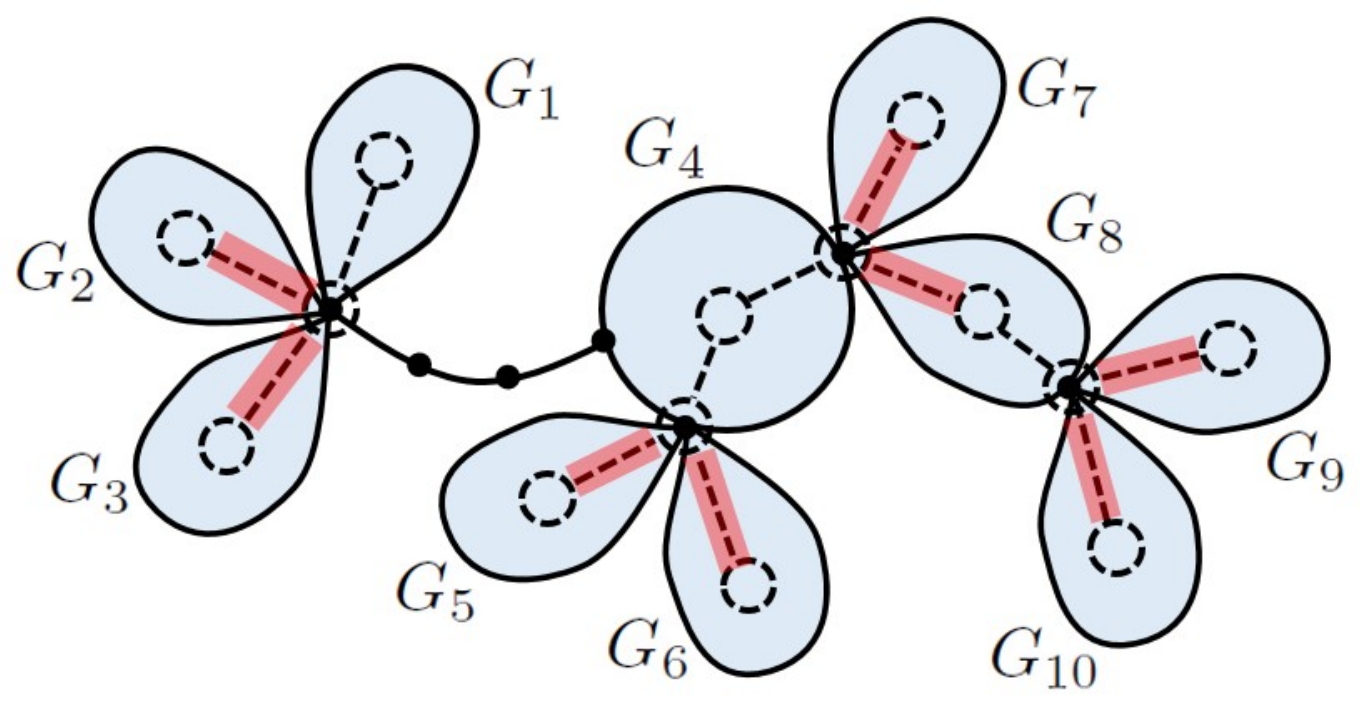}
\end{center}
\caption{A graph $G$ with $\delta(G)\geq2$ and ten non-trivial blocks $G_{i}.$
Assuming that all blocks which share a vertex $v$ are $v$-incident, the graph
$\Gamma$ of $G$ is shown in dashed line. Graph $\Gamma$ is a forest with two
trees, by designating $G_{1}$ and $G_{4}$ as the roots of the corresponding
trees of $G,$ a set $E^{\prime}\subseteq E(\Gamma)$ from Claim C within the
proof of Theorem \ref{Tm_blocks} is marked in $\Gamma.$}%
\label{Fig_dual}%
\end{figure}

Now we can use the previous two lemmas to state a main result of this section.

\begin{theorem}
\label{Tm_blocks}Let $G$ be a graph with $\delta(G)\geq2.$ Let $G_{1}%
,\ldots,G_{q}$ be all non-trivial blocks in $G.$ Suppose that $\mathrm{dim}%
(G_{i})\leq2c(G_{i})-1$ (resp. $\mathrm{edim}(G_{i})\leq2c(G_{i})-1$) whenever
$G_{i}$ is not a cycle. Then $\mathrm{dim}(G)\leq2c(G)-1$ (resp.
$\mathrm{edim}(G)\leq2c(G)-1$).
\end{theorem}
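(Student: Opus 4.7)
The plan is to combine Lemmas~\ref{Lemma_c} and~\ref{Lemma_dim} with the fact that a cycle block has metric dimension equal to $2$. Since most of the work has already been done in those two lemmas, what remains is essentially a bookkeeping calculation that isolates the contribution of cycle blocks from that of non-cycle blocks, and then cancels $p$ against itself.

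First, I would order the non-trivial blocks of $G$ so that $G_{1},\ldots,G_{p}$ are the non-cycle non-trivial blocks and $G_{p+1},\ldots,G_{q}$ are the cycle blocks, matching the convention in Lemma~\ref{Lemma_dim}. Assuming $G\neq C_{n}$ (otherwise the statement fails, since $\mathrm{dim}(C_{n})=2>2c(C_{n})-1=1$; this exclusion is the one already built into Conjecture~\ref{Con_dim_leaves} and Lemma~\ref{Lemma_dim}), I would apply Lemma~\ref{Lemma_dim} to obtain
\[
\mathrm{dim}(G)\leq\sum_{i=1}^{q}\mathrm{dim}(G_{i})+p-1.
\]
For $i\leq p$, the hypothesis gives $\mathrm{dim}(G_{i})\leq 2c(G_{i})-1$. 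For $i>p$, the block $G_{i}$ is a cycle, so $\mathrm{dim}(G_{i})=2=2c(G_{i})$. Summing these bounds yields
\[
\sum_{i=1}^{q}\mathrm{dim}(G_{i})\leq 2\sum_{i=1}^{q}c(G_{i})-p.
\]

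Next, I would invoke Lemma~\ref{Lemma_c}, using that trivial blocks ($K_{2}$) contribute $0$ to the cyclomatic number, so $\sum_{i=1}^{q}c(G_{i})=c(G)$. Substituting back gives
\[
\mathrm{dim}(G)\leq 2c(G)-p+p-1=2c(G)-1,
\]
as required. The edge metric dimension case is verbatim the same, using the edge versions of both lemmas together with $\mathrm{edim}(C_{n})=2$.

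There is no serious obstacle in this final step: the substantive work was done in Lemma~\ref{Lemma_dim}, where pairs across distinct blocks had to be distinguished via the forest $\Gamma$ and the critical incidence analysis. The only subtlety worth flagging is the boundary case $p=0$, in which every non-trivial block is a cycle and $G$ is therefore a leafless cactus; the calculation above still gives $\mathrm{dim}(G)\leq 2q-1=2c(G)-1$, consistent with Proposition~\ref{Prop_leflessDim}, so no separate treatment is needed as long as one has already excluded $G=C_{n}$.
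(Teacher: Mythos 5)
Your proof is correct and follows essentially the same route as the paper: apply Lemma~\ref{Lemma_dim}, bound non-cycle blocks by the hypothesis and cycle blocks by $\mathrm{dim}(C_{n})=2=2c(C_{n})$, then collapse the sum via Lemma~\ref{Lemma_c}; your arithmetic is in fact cleaner than the paper's intermediate display, which contains a harmless slip. Your remark that $G=C_{n}$ must be excluded (since $\mathrm{dim}(C_{n})=2>2c(C_{n})-1$) is a valid observation that the paper leaves implicit via Lemma~\ref{Lemma_dim}'s hypothesis.
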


\begin{proof}
Assume that non-trivial blocks $G_{i}$ of $G$ are denoted so that $G_{i}$ is a
cycle if and only if $i>p.$ Then by Lemma \ref{Lemma_dim} we have%
\[
\mathrm{dim}(G)\leq\mathrm{dim}(G_{1})+\cdots+\mathrm{dim}(G_{q})+p-1.
\]
Since the metric dimension of the cycle equals two and since we assumed
$\mathrm{dim}(G_{i})\leq2c(G_{i})-1$ for every $i=1,\ldots,p,$ we further
obtain%
\begin{align*}
\mathrm{dim}(G)  &  \leq(2c(G_{1})-1)+\cdots+(2c(G_{p})-1)+2(q-p)+p-1=\\
&  =2c(G_{1})+\cdots+2c(G_{q})-p+(q-p)+p-1.
\end{align*}
Lemma \ref{Lemma_c} now yields
\[
\mathrm{dim}(G)\leq2c(G)-(q-p)-1\leq2c(G)-1,
\]
and we are finished. The proof for $\mathrm{edim}(G)$ is analogous.
\end{proof}

As for the question when the equality is attained, the proofs of Lemma
\ref{Lemma_dim} and Theorem \ref{Tm_blocks} imply the following necessary condition.

\begin{corollary}
Let $G\not =C_{n}$ be a graph with $\delta(G)\geq2.$ If $\mathrm{dim}%
(G_{i})<2c(G_{i})-1$ (resp $\mathrm{edim}(G)\leq2c(G)-1$) for a block $G_{i}$
of $G$ distinct from a cycle or there exist two vertex-disjoint non-trivial
blocks $G_{i}$ and $G_{j}$ in $G$, then $\mathrm{dim}(G)<2c(G)-1$ (resp.
$\mathrm{edim}(G)<2c(G)-1$).
\end{corollary}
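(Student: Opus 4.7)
The plan is to revisit the inequality chain
\[
\mathrm{dim}(G) \;\le\; |S|+|S'| \;\le\; \sum_{i=1}^{q}\mathrm{dim}(G_i)+p-1 \;\le\; 2c(G)-1
\]
established in the proofs of Lemma \ref{Lemma_dim} and Theorem \ref{Tm_blocks}, where $|S|=\sum_{i=1}^{q}\mathrm{dim}(G_i)-(q-p)$, the estimate $|S'|\le q-1$ comes from Claim C, and the final step uses $\mathrm{dim}(G_i)\le 2c(G_i)-1$ for every non-cycle block; and to argue that each hypothesis of the corollary forces at least one of these inequalities to become strict. The edge version proceeds through the identical chain with $\mathrm{edim}$ in place of $\mathrm{dim}$.

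The first hypothesis is handled by direct substitution. If some non-cycle block $G_i$ satisfies $\mathrm{dim}(G_i)\le 2c(G_i)-2$, then the final step of the chain drops by one, yielding $\mathrm{dim}(G)\le 2c(G)-2<2c(G)-1$, with no other change needed in the argument.

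For the second hypothesis, the plan is to improve the middle step to $|S'|\le q-2$. Since $\Gamma$ is a forest on $q+|\mathcal{V}|$ vertices with $k$ connected components, $|E(\Gamma)|=q+|\mathcal{V}|-k$, and because every critical vertex has degree at least two in $\Gamma$, the smallest $|E'|$ satisfying the property of Claim C equals $q-k$. It therefore suffices to prove that, whenever $G$ contains vertex-disjoint non-trivial blocks $G_i$ and $G_j$, the sets $S_\ell$ can be chosen so that $\Gamma$ has $k\ge 2$ components. I would look at the unique path $G_i=H_0,w_1,H_1,\ldots,w_r,H_r=G_j$ between them in the block-cut tree of $G$. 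Since $G_i$ and $G_j$ share no vertex we have $r\ge 2$, so the path contains at least one intermediate block $H_l$. If $H_l$ is trivial it lies outside $\mathcal{G}$, and the cut vertices adjacent to it on the path are not shared by two non-trivial blocks along this segment, which already prevents $G_i$ and $G_j$ from lying in the same component of $\Gamma$. If every intermediate $H_l$ is non-trivial, I would choose $S_{H_l}$ so that neither $w_l$ nor $w_{l+1}$ becomes critical on $H_l$: if $H_l$ is a cycle, by placing the single chosen vertex strictly in the interior of the shorter arc between $w_l$ and $w_{l+1}$; if $H_l$ is a non-cycle, by exploiting the freedom in choosing a smallest metric generator of the $2$-connected graph $H_l$ so that no shortest path from a vertex of $H_l$ to $S_{H_l}$ is forced through $w_l$ or $w_{l+1}$.

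The main obstacle lies in the last sub-case, where every intermediate block along the path is a non-cycle non-trivial block. There I would need to prove that a smallest metric (or edge metric) generator of any $2$-connected non-cycle graph can be selected so that no shortest path to the generator passes exclusively through a prescribed cut vertex; the existence of internally disjoint paths in $2$-connected graphs should make this possible, but carrying the argument out uniformly for both the vertex and edge versions is where the real technical work resides. Once this local adjustment is available, the construction of $S^\ast$ proceeds exactly as in Lemma \ref{Lemma_dim} and yields $|S^\ast|\le 2c(G)-2$, completing the proof.
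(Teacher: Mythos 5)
Your reduction of both implications to the inequality chain $\mathrm{dim}(G)\leq|S|+|S'|\leq\sum_{i}\mathrm{dim}(G_{i})+p-1\leq 2c(G)-1$ is exactly what the paper intends (it offers no separate proof, only the remark that the statement follows from the proofs of Lemma \ref{Lemma_dim} and Theorem \ref{Tm_blocks}), and your treatment of the first hypothesis is complete: one unit of slack in the last step immediately gives $\mathrm{dim}(G)\leq 2c(G)-2$. Your accounting $|E'|=q-k$, where $k$ is the number of components of $\Gamma$, is also correct, so the second implication does indeed come down to showing that the blocks' generators can be chosen so that $\Gamma$ is disconnected whenever $G$ contains two vertex-disjoint non-trivial blocks.

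That last step, however, is where your proposal has a genuine gap, and you acknowledge it yourself. Disconnecting $\Gamma$ requires that for at least one intermediate non-trivial block $H_{l}$ on the block-cut-tree path from $G_{i}$ to $G_{j}$, one of its two path cut-vertices $w_{l},w_{l+1}$ fails to be critical, and neither of your two sub-cases delivers this. For a cycle block the single chosen vertex placed in the interior of the shorter $w_{l}$--$w_{l+1}$ arc does \emph{not} prevent $w_{l}$ from being critical: in $C_{6}=u_{0}u_{1}\cdots u_{5}$ with $w_{l}=u_{2}$, $w_{l+1}=u_{4}$ and chosen vertex $u_{3}$, the vertex $u_{1}$ still satisfies $d(u_{1},u_{3})=d(u_{1},u_{2})+d(u_{2},u_{3})$, so $u_{2}$ can remain critical (and for a triangle the ``interior of the shorter arc'' is empty). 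For a non-cycle $2$-connected block you explicitly defer the key claim -- that a smallest (edge) metric generator can always be chosen so that a prescribed cut vertex is not critical -- to ``the real technical work,'' and the existence of internally disjoint paths does not by itself yield it: criticality of $w$ on $H_{l}$ only requires some $x\neq w$ with $d(x,s)=d(x,w)+d(w,s)$ for all $s\in S_{H_{l}}$ together with a matching partner in the neighbouring block, and ruling this out for every smallest generator, simultaneously for the vertex and edge versions, is precisely the content of the implication being proved. Until that lemma (or some replacement source of slack in the chain) is established, the second implication is asserted rather than proved.
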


As for the role of Theorem \ref{Tm_blocks} in the journey towards the solution
of Conjectures \ref{Con_dim_leaves} and \ref{Con_edim_leaves} we can state the following.

\begin{corollary}
If Conjecture \ref{Con_dim_leaves} (resp. Conjecture \ref{Con_edim_leaves})
holds for all $2$-connected graphs $G$ distinct from $C_{n},$ then it holds in general.
\end{corollary}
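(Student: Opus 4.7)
The plan is to derive the corollary by a short case analysis that plugs the hypothesis into the two previous results of this section, namely Proposition~\ref{Prop_connected} (handling $\delta(G)\geq 3$) and Theorem~\ref{Tm_blocks} (reducing the $\delta(G)=2$ case to non-trivial blocks distinct from cycles). Fix any connected graph $G\neq C_n$ with $\delta(G)\geq 2$ and assume, as in the statement, that Conjecture~\ref{Con_dim_leaves} has been established for every $2$-connected graph different from $C_n$.

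First I would split on the minimum degree. If $\delta(G)\geq 3$, Proposition~\ref{Prop_connected} already gives the strict bound $\mathrm{dim}(G)<2c(G)-1$ and there is nothing left to check. So assume $\delta(G)=2$. Since $\kappa(G)\leq\delta(G)$ and $G$ has no leaves, $\kappa(G)\in\{1,2\}$. When $\kappa(G)=2$, the graph $G$ itself is $2$-connected and distinct from $C_n$, so the hypothesis of the corollary applies directly to $G$ and yields $\mathrm{dim}(G)\leq 2c(G)-1$.

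The only remaining case is $\delta(G)=2$ and $\kappa(G)=1$, where I would invoke Theorem~\ref{Tm_blocks}. Its assumption requires, for every non-trivial block $G_i$ of $G$ that is not a cycle, the bound $\mathrm{dim}(G_i)\leq 2c(G_i)-1$. But each such block is, by definition, a maximal $2$-connected subgraph of $G$, and being not a cycle it is in particular distinct from $C_n$; so the hypothesis of the corollary applies to every $G_i$ and supplies exactly this bound. Theorem~\ref{Tm_blocks} then lifts the per-block inequalities to $\mathrm{dim}(G)\leq 2c(G)-1$, finishing the vertex case. The edge-dimension statement follows by repeating the same three-step case analysis verbatim, replacing $\mathrm{dim}$ by $\mathrm{edim}$, Conjecture~\ref{Con_dim_leaves} by Conjecture~\ref{Con_edim_leaves}, and using the edge-variant halves of Proposition~\ref{Prop_connected} and Theorem~\ref{Tm_blocks}. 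There is no genuine obstacle in the argument: all substantive work has already been absorbed into the two earlier results, and the only check worth flagging is the trivial observation that a non-trivial block to which the hypothesis is applied is automatically $2$-connected and, when it is not a cycle, automatically distinct from $C_n$.
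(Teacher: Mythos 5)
Your proof is correct and rests on the same key observation as the paper's one-line argument, namely that every non-trivial block of a graph with $\delta(G)\geq 2$ is $2$-connected, so the hypothesis supplies the per-block bounds needed by Theorem \ref{Tm_blocks}. The paper simply applies Theorem \ref{Tm_blocks} to all cases at once; your extra case split on $\delta(G)\geq 3$ (via Proposition \ref{Prop_connected}) and on $\kappa(G)=2$ is harmless but unnecessary, since Theorem \ref{Tm_blocks} already covers those situations.
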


\begin{proof}
The claim is the consequence of Theorem \ref{Tm_blocks} and the fact that
every non-trivial block in $G$ with $\delta(G)\geq2$ is a $2$-connected graph.
\end{proof}

\section{Concluding remarks}

In \cite{SedSkreExtensionCactus} it was established that the inequalities
$\mathrm{dim}(G)\leq L(G)+2c(G)$ and $\mathrm{edim}(G)\leq L(G)+2c(G)$ hold
for cacti and it was further conjectured that the same upper bounds hold for
metric dimensions of general connected graphs. Noticing that the attainment of
these bounds in the class of cacti depends on the presence of the leaves in a
graph, in this paper we focused on leafless graphs. In leafless graphs it
holds that $L(G)=0,$ so the conjectured upper bound becomes $2c(G).$ We
started by characterizing all cacti for which the first smaller upper bound is
attained, i.e. $\mathrm{dim}(G)=L(G)+2c(G)-1$ and $\mathrm{edim}(G)\leq
L(G)+2c(G)-1.$ A direct consequence of this characterization is that there are
some leafless cacti for which this decreased bound is attained. Therefore, in
the class of leafless cacti the upper bound $2c(G)-1$ is tight for both metric dimensions.

We conjectured that the decreased upper bound $2c(G)-1$ hols for both metric
dimensions of all connected graphs without leaves, i.e. all graphs with
$\delta(G)\geq2$. These conjectures we tested both systematically and
stochastically for graphs of smaller order and we did not encounter a
counterexample. As a first step towards the solution of this conjecture, we
show that it holds for all graphs with $\delta(G)\geq3$. Also, we showed that
if the decreased bound $2c(G)-1$ hold for metric dimensions of $2$-connected
graph, then they will also hold for all graphs with $\delta(G)\geq2,$ i.e. the
conjecture will be solved if it is established to hold for $2$-connected
graph. Establishing that the conjectured bounds hold for metric dimensions of
$2$-connected graphs we leave as an open problem.

\bigskip

\bigskip\noindent\textbf{Acknowledgments.}~~Both authors acknowledge partial
support of the Slovenian research agency ARRS program\ P1-0383 and ARRS
project J1-1692. The first author also the support of Project
KK.01.1.1.02.0027, a project co-financed by the Croatian Government and the
European Union through the European Regional Development Fund - the
Competitiveness and Cohesion Operational Programme.


\begin{thebibliography}{99}                                                                                               %


\bibitem {BuczkowskiVertex}P. S. Buczkowski, G. Chartrand, C. Poisson, P.
Zhang, On $k$-dimensional graphs and their bases, \emph{Period. Math. Hungar.}
\textbf{46(1)} (2003) 9--15.

\bibitem {ChartrandVertex}G. Chartrand, L. Eroh, M. A. Johnson, O. R.
Oellermann, Resolvability in graphs and the metric dimension of a graph,
\emph{Discrete Appl. Math. }\textbf{105} (2000) 99--113.

\bibitem {GenesonEdge}J. Geneson, Metric dimension and pattern avoidance in
graphs, \emph{Discrete Appl. Math.} \textbf{284} (2020) 1--7.

\bibitem {HararyVertex}F. Harary, R. A. Melter, On the metric dimension of a
graph, \emph{Ars Combin.} \textbf{2} (1976) 191--195.

\bibitem {KelencMixed}A. Kelenc, D. Kuziak, A. Taranenko, I. G. Yero, Mixed
metric dimension of graphs, \emph{Appl. Math. Comput.} \textbf{314(1)} (2017) 42--438.

\bibitem {TratnikEdge}A. Kelenc, N. Tratnik, I. G. Yero, Uniquely identifying
the edges of a graph: the edge metric dimension, \emph{Discrete Appl. Math.}
\textbf{251} (2018) 204--220.

\bibitem {KhullerVertex}S. Khuller, B. Raghavachari, A. Rosenfeld, Landmarks
in graphs, \emph{Discrete Appl. Math.} \textbf{70} (1996) 217--229.

\bibitem {KleinVertex}D. J. Klein, E. Yi, A comparison on metric dimension of
graphs, line graphs, and line graphs of the subdivision graphs, \emph{Eur. J.
Pure Appl. Math.} \textbf{5(3)} (2012) 302--316.

\bibitem {Knor}M. Knor, S. Majstorovi\'{c}, A. T. M. Toshi, R. \v{S}krekovski,
I. G. Yero, Graphs with the edge metric dimension smaller than the metric
dimension, \emph{Appl. Math. Comput.} \textbf{401} (2021) 126076.

\bibitem {MelterVertex}R. A. Melter, I. Tomescu, Metric bases in digital
geometry, \emph{Comput. Vis. Graph. Image Process.} \textbf{25} (1984) 113--121.

\bibitem {PeterinEdge}I. Peterin, I. G. Yero, Edge metric dimension of some
graph operations, \emph{Bull. Malays. Math. Sci. Soc.} \textbf{43} (2020) 2465--2477.

\bibitem {SedSkreBounds}J. Sedlar, R. \v{S}krekovski, Bounds on metric
dimensions of graphs with edge disjoint cycles, \emph{Appl. Math. Comput.}
\textbf{396} (2021) 125908.

\bibitem {SedSkreTheta}J. Sedlar, R. \v{S}krekovski, Extremal mixed metric
dimension with respect to the cyclomatic number, \emph{Appl. Math. Comput.}
\textbf{404} (2021) 126238.

\bibitem {SedSkrekMixed}J. Sedlar, R. \v{S}krekovski, Mixed metric dimension
of graphs with edge disjoint cycles, \emph{Discrete Appl. Math.} \textbf{300}
(2021) 1--8.

\bibitem {SedSkreExtensionCactus}J. Sedlar, R. \v{S}krekovski, Vertex and edge
metric dimensions of cacti, arXiv:2107.01397 [math.CO].

\bibitem {SedSkreUnicyclic}J. Sedlar, R. \v{S}krekovski, Vertex and edge
metric dimensions of unicyclic graphs, arXiv:2104.00577 [math.CO].

\bibitem {ZhuEdge}E. Zhu, A. Taranenko, Z. Shao, J. Xu, On graphs with the
maximum edge metric dimension, \emph{Discrete Appl. Math.} \textbf{257} (2019) 317--324.

\bibitem {ZubrilinaEdge}N. Zubrilina, On the edge dimension of a graph,
\emph{Discrete Math.} \textbf{341(7)} (2018) 2083--2088.
\end{thebibliography}
\end{document}